\documentclass[11pt,reqno]{amsart}

\usepackage{amssymb,amsmath,amsthm,graphicx,xcolor,mathtools,mathrsfs,tabularx,bbm,tikz,url}
\usepackage{mathabx}\changenotsign        
\usepackage{dsfont}
\usepackage[shortlabels]{enumitem}
\usepackage{lmodern}
\usepackage[babel]{microtype}
\usepackage[british]{babel}
\usepackage[utf8]{inputenc}
\allowdisplaybreaks

\usepackage[backref]{hyperref} 
\hypersetup{
	colorlinks,
	linkcolor={red!60!black},
	citecolor={green!60!black},
	urlcolor={blue!60!black}
}

\usepackage{setspace}

\usepackage{geometry}
\geometry{left=30mm,right=30mm, top=25mm, bottom=25mm}




\let\setminus=\smallsetminus
\let\emptyset=\varnothing

\makeatletter
\def\moverlay{\mathpalette\mov@rlay}
\def\mov@rlay#1#2{\leavevmode\vtop{   \baselineskip\z@skip \lineskiplimit-\maxdimen
		\ialign{\hfil$\m@th#1##$\hfil\cr#2\crcr}}}
\newcommand{\charfusion}[3][\mathord]{
	#1{\ifx#1\mathop\vphantom{#2}\fi
		\mathpalette\mov@rlay{#2\cr#3}
	}
	\ifx#1\mathop\expandafter\displaylimits\fi}
\makeatother





\usepackage{hyperref}
\hypersetup{colorlinks, linkcolor={red!50!black}, citecolor={green!50!black}, urlcolor={blue!50!black}}


\usepackage{caption}
\captionsetup{font=footnotesize}
\usepackage{subcaption}



\usepackage[mathlines]{lineno}
\usepackage{etoolbox} 

\newcommand*\linenomathpatch[1]{%
	\expandafter\pretocmd\csname #1\endcsname {\linenomath}{}{}%
	\expandafter\pretocmd\csname #1*\endcsname{\linenomath}{}{}%
	\expandafter\apptocmd\csname end#1\endcsname {\endlinenomath}{}{}%
	\expandafter\apptocmd\csname end#1*\endcsname{\endlinenomath}{}{}%
}
\newcommand*\linenomathpatchAMS[1]{%
	\expandafter\pretocmd\csname #1\endcsname {\linenomathAMS}{}{}%
	\expandafter\pretocmd\csname #1*\endcsname{\linenomathAMS}{}{}%
	\expandafter\apptocmd\csname end#1\endcsname {\endlinenomath}{}{}%
	\expandafter\apptocmd\csname end#1*\endcsname{\endlinenomath}{}{}%
}

\expandafter\ifx\linenomath\linenomathWithnumbers
\let\linenomathAMS\linenomathWithnumbers
\patchcmd\linenomathAMS{\advance\postdisplaypenalty\linenopenalty}{}{}{}
\else
\let\linenomathAMS\linenomathNonumbers
\fi

\linenomathpatchAMS{gather}
\linenomathpatchAMS{multline}
\linenomathpatchAMS{align}
\linenomathpatchAMS{alignat}
\linenomathpatchAMS{flalign}
\linenomathpatch{equation}


\usepackage{cleveref}

\theoremstyle{plain}
\newtheorem{theorem}{Theorem}[section]
\crefname{theorem}{Theorem}{Theorems}

\newtheorem{proposition}[theorem]{Proposition}
\crefname{proposition}{Proposition}{Propositions}

\crefname{corollary}{Corollary}{Corollaries}

\newtheorem{lemma}[theorem]{Lemma}
\crefname{lemma}{Lemma}{Lemmata}

\newtheorem{conjecture}[theorem]{Conjecture}
\crefname{conjecture}{Conjecture}{Conjectures}

\crefname{problem}{Problem}{Problem}

\crefname{claim}{Claim}{Claims}

\crefname{setup}{Setup}{Setups}

\crefname{fact}{Fact}{Facts}

\crefname{algorithm}{Algorithm}{Algorithms}

\crefname{remark}{Remark}{Remarks}

\crefname{example}{Example}{Examples}

\theoremstyle{definition}
\newtheorem{definition}[theorem]{Definition}
\crefname{definition}{Definition}{Definitions}

\newtheorem{observation}[theorem]{Observation}
\crefname{observation}{Observation}{Observations}

\crefname{procedure}{Procedure}{Procedures}

\crefname{construction}{Construction}{Constructions}

\crefname{question}{Question}{Questions}

\numberwithin{equation}{section}

\crefname{section}{Section}{Sections}
\crefname{appendix}{Appendix}{Appendix}

\crefname{figure}{Figure}{Figures}



\theoremstyle{definition}

\newtheorem*{space-prop}{Space}








\def\COMMENT#1{}

\usepackage{comment}

\let\polishlcross=\l
\def\l{\ifmmode\ell\else\polishlcross\fi}
 
\newcommand{\es}{\emptyset}
\newcommand{\eps}{\varepsilon}
\renewcommand{\rho}{\varrho}

\renewcommand{\subset}{\subseteq}

\newcommand{\NATS}{\mathbb{N}}

\newcommand{\REALS}{\mathbb{R}}

\DeclareMathOperator{\Exp}{{E}}

\renewcommand{\phi}{\varphi}



\DeclareMathOperator{\PF}{\mathsf{P}}

\DeclareMathOperator{\lst}{\mathsf{list}}

\DeclarePairedDelimiter{\parens}{(}{)}
\DeclarePairedDelimiter{\set}{\{}{\}}

\DeclarePairedDelimiter{\floor}{\lfloor}{\rfloor}

\DeclarePairedDelimiter{\brackets}{[}{]}
\DeclarePairedDelimiter{\size}{|}{|}





\title{Simultaneous edge-colourings}

\author[S.~Boyadzhiyska]{Simona Boyadzhiyska}
\address{HUN-REN Alfréd Rényi Institute of Mathematics, Budapest, Hungary.}
\email{simona@renyi.hu}

\author[R.~Lang]{Richard Lang}
\address
{
	Departament de Matemàtiques,
	Universitat Politècnica de Catalunya,
	Barcelona, Spain.
}
\email{richard.lang@upc.edu}

\author[A.~Lo]{Allan Lo}
\address{School of Mathematics, University of Birmingham, Birmingham, United Kingdom.}
\email{s.a.lo@bham.ac.uk}

\author[M.~Molloy]{Michael Molloy}
\address{Department of Computer Science, University of Toronto, Canada.}
\email{molloy@cs.toronto.ca}

\subjclass[2020]{
	05C15, 
	05D15, 
	05D40, 
}
\keywords{Edge-colouring, hypergraphs}
\thanks{The research leading to these results was supported by EPSRC, grant no. EP/V002279/1 (A.~Lo) and EP/V048287/1 (S.~Boyadzhiyska and A.~Lo).
		There are no additional data beyond that contained within the main manuscript.
		This research was conducted while the first author was at the School of Mathematics, University of Birmingham, Birmingham, United Kingdom.
		\\
	R.~Lang was supported by the Marie Skłodowska-Curie Actions (101018431), the Ramón y Cajal programme (RYC2022-038372-I) and by grant PID2023-147202NB-I00 funded
	by MICIU/AEI/10.13039/501100011033.\\
	M.~Molloy was supported by an NSERC Discovery Grant.}


\begin{document}

	\begin{abstract}
		We study a generalisation of Vizing's theorem, where the goal is to simultaneously colour the edges of graphs $G_1,\dots,G_k$ with few colours. We obtain asymptotically optimal bounds for the required number of colours in terms of the maximum degree $\Delta$, for small values of $k$ and for an infinite sequence of values of $k$. This asymptotically settles a conjecture of Cabello for $k=2$.
		Moreover,  we show that $\sqrt k \Delta + o(\Delta)$ colours always suffice, which tends to the optimal value as $k$ grows.
		We also show that $\ell \Delta + o(\Delta)$ colours are enough when every edge appears in at most $\ell$ of the graphs,  which asymptotically confirms a conjecture of Cambie.
		Finally, our results extend to the list setting.
		We also find a close connection to a conjecture of Füredi, Kahn, and Seymour from the 1990s and an old problem about fractional matchings.
	\end{abstract}
	
	\maketitle
	\vspace{-0.5cm}

	\section{Introduction}\label{sec:intro}
	
	A classic theorem of {Vizing} states that a graph $G$ with maximum degree $\Delta(G) \leq \Delta$ admits a proper edge-colouring using at most $\Delta + 1$ colours.
	We investigate the following natural generalisation of this problem:
	\begin{align*}
		\begin{minipage}[c]{0.9\textwidth}
			{\em Consider graphs $G_1,\dots,G_k$ with $\Delta(G_1),\dots,\Delta(G_k) \leq \Delta$, {possibly sharing some edges.}
				How many colours guarantee an edge-colouring {of} $G_1 \cup \dots \cup G_k$, which is proper for each individual graph $G_1,\dots,G_k$?}
		\end{minipage}\ignorespacesafterend
	\end{align*}
	
	The study of such {simultaneously proper colourings} was proposed by Cabello  and initiated by Bousquet and Durain~\cite{bousquet2020note}. {We} obtain new, asymptotically tight, upper bounds in terms of~$\Delta$. Perhaps surprisingly, the simultaneous colouring question turns out to be closely related to classical problems about colourings and  fractional matchings in hypergraphs.
	
	\medskip
	
	To formalise this discussion, we define an {edge-colouring} $\phi \colon E(G) \to  \NATS$ of a graph $G$ to be \emph{proper} if no two incident edges are assigned the same colour.
	For graphs $G_1,\dots,G_k$, an edge-colouring of $G_1 \cup \dots \cup G_k$ is \emph{simultaneously proper} (or \emph{simultaneous} for short) if its restriction to each individual $G_i$ is proper.
	The \emph{simultaneous chromatic index} $\chi'(G_1,\dots,G_k)$ is the smallest number of colours for which there exists a simultaneous edge-colouring of $G_1,\dots, G_k$.
	Note that for $k=1$ this coincides with the usual chromatic index.
	
	We are interested in bounds on $\chi'(G_1,\dots,G_k)$ in terms of $\Delta$, where $\Delta(G_1),\dots,\Delta(G_k) \leq \Delta$.
	For instance, Vizing's theorem tells us that $\Delta \leq \chi'(G_1) \leq \Delta +1$ when $k=1$.
	Beyond this, only partial results are known.
	
	\medskip
	It is not hard to show that $\chi'(G_1,G_2) \leq \Delta+1$ when $\Delta \leq 2$ (\cref{prop:Delta=2}).
	On the other hand, $\Delta+1$ colours are sometimes required even when $G_1,\dots,G_k$ are bipartite, and hence each $G_i$ satisfies $\chi'(G_i)=\Delta$ (\cref{prop:bipartitelowerbound}).
	Cabello showed that $\chi'(G_1,G_2) \leq \Delta + 2$ when $G_1 \cap G_2$ is regular and conjectured that this bound is  valid for any $G_1$ and $G_2$.
	The first progress in this direction was obtained by Bousquet and Durain~\cite{bousquet2020note}, who proved that
	\begin{align*}
		\chi'(G_1,G_2) \leq 3\Delta/2+4.
	\end{align*}
	We prove that {Cabello's} conjecture is asymptotically true.
	
	For $k\geq 3$, significantly more than $\Delta$ colours may be necessary. Indeed, Bousquet and Durain~\cite{bousquet2020note} provided an example where $G_1\cup G_2\cup G_3$ is a star and  $\chi'(G_1,G_2,G_3)=3\lfloor\frac{\Delta}{2}\rfloor$. We prove a matching upper bound showing that this example is asymptotically extremal.
	
	For general $k$, applying Vizing's theorem to the union  $G_1\cup\dots\cup G_k$ yields $\chi'(G_1,\dots,G_k) \leq k \Delta+1$.
	Bousquet and Durain~\cite{bousquet2020note}  improved this to $2 \sqrt{2k} \Delta - \sqrt{2k} +2 $, which was subsequently sharpened by Cambie~\cite[Section~8.3]{cambie2022extremal} to
	\begin{align*}
		\chi'(G_1,\dots,G_k) \leq \sqrt{2k}\Delta+1.
	\end{align*}
	Turning to lower bounds, Cambie~\cite{cambie2022extremal} modified a construction of Bousquet and Durain~\cite{bousquet2020note} to show a general lower bound, {building collections of graphs requiring approximately $\sqrt{k} \Delta$ colours}. Thus, the upper bound differs from the lower bound by a factor of roughly $\sqrt{2}$. We close this gap by proving an upper bound asymptotic to~$\sqrt{k}\Delta$.
	
	\medskip
	Let us now state our main result.
	It is formulated in terms of a function $\nu(k)\colon \NATS \to \REALS$, which we shall introduce formally and discuss in more detail in \cref{sec:stars}.
	
	\begin{theorem}\label{thm:simultaneous-colourings}
		Let $k\geq 1$ be a fixed integer. Then all graphs $G_1,\dots,G_k$ of maximum degree at most~$\Delta$ satisfy $\chi'(G_1,\dots,G_k) \leq \nu(k)\Delta +o(\Delta)$. Furthermore, {there exist $\Delta$-regular stars $G_1,\dots, G_k$ with the same centre vertex such that}  $\chi'(G_1,\dots,G_k) \geq \nu(k)\Delta -k$.
	\end{theorem}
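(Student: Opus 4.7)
The lower bound is essentially by definition. In \cref{sec:stars} the quantity $\nu(k)$ will be characterised as the asymptotic extremiser of $\chi'(G_1,\dots,G_k)/\Delta$ over $k$-tuples of stars sharing a common centre vertex, arising from a finite hypergraph/LP optimisation on the vertex set $[k]$. Instantiating a near-extremal configuration and rounding the fractional counts to integers with maximum degree $\Delta$ yields $\Delta$-regular stars $G_1,\dots,G_k$ on a common centre with $\chi'(G_1,\dots,G_k) \geq \nu(k)\Delta - k$, the $-k$ absorbing the integer rounding error.

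For the upper bound, the plan is to reformulate a simultaneous edge-colouring as a proper edge-colouring of an auxiliary hypergraph. Let $\cH$ have vertex set $\{(v,i): i\in[k],\, v\in V(G_i)\}$ and, for every $e \in E(G_1 \cup \dots \cup G_k)$ with signature $S_e = \{i : e \in G_i\}$, a hyperedge $h_e = \{(v,i) : v \in e,\, i \in S_e\}$. Two hyperedges $h_e, h_f$ meet at some $(v,i)$ precisely when $e$ and $f$ share an endpoint $v$ and both belong to a common $G_i$, so $\chi'(\cH) = \chi'(G_1,\dots,G_k)$. The sub-hypergraph of $\cH$ induced on $\{(v,i)\}_{i\in[k]}$ at each fixed vertex $v$ is itself a star instance of maximum $G_i$-degree $\max_i \deg_{G_i}(v) \leq \Delta$, so its fractional chromatic index is at most $\nu(k)\Delta$ by definition. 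The plan then has two steps: (i) pass from these local bounds to a global fractional bound $\chi'_f(\cH) \leq \nu(k)\Delta$ via LP duality and a fractional-matching argument; (ii) invoke a nibble-based rounding result, such as Kahn's asymptotic edge-colouring theorem for hypergraphs of bounded rank, to deduce $\chi'(\cH) \leq \chi'_f(\cH) + o(\Delta) \leq \nu(k)\Delta + o(\Delta)$, since $\cH$ has edges of size at most $2k$ and maximum vertex-degree $\Delta$.

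The principal obstacle is step (i). Although the local bound is essentially tautological from the definition of $\nu(k)$, globalising it into a consistent fractional edge-colouring of $\cH$ requires the fractional weights at different vertices to agree on shared hyperedges $h_e$, and this patching is genuinely nontrivial; it is precisely the place where we expect to make contact with the classical conjecture of Füredi, Kahn, and Seymour and the hypergraph fractional-matching framework emphasised in the abstract. Compounding the difficulty, the hypergraph $\cH$ already has codegrees of order $\Omega(\Delta)$ in the extremal star example, so tools such as the classical Pippenger-Spencer theorem are unavailable and must be replaced by results tailored to bounded-rank hypergraphs with possibly large codegree. Once (i) is established, the integral rounding of step (ii) is technically delicate but conceptually standard.
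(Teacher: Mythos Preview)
Your lower-bound sketch is correct and matches the paper (\cref{lem:index-nu-lower} with \cref{obs:tars-multihypergraphs-index}). The upper-bound plan, however, has a genuine gap, located primarily in step~(ii). You correctly observe that $\cH$ has codegrees of order $\Delta$, but then describe the passage from $\chi'_f(\cH)$ to $\chi'(\cH)$ as ``technically delicate but conceptually standard''. It is not: a statement of the form $\chi'(H)\le\chi'_f(H)+o(\Delta)$ for bounded-rank hypergraphs with unrestricted codegree is open, and combined with the fractional bound that Füredi, Kahn, and Seymour already proved, it would immediately yield their integral conjecture, recorded in the paper as \cref{con:alon-kim} and still unresolved. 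So step~(ii) presupposes a black box at least as hard as the target theorem. Step~(i), which you rightly flag as the principal obstacle, is also left entirely open: you give no mechanism for patching the local data into a global fractional edge-colouring of $\cH$, and the obvious candidates (colour classes assembled from the $\psi_v$) do not form matchings in $\cH$.

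The paper's route avoids both issues by never forming $\cH$ and never touching $\chi'_f$. It partitions the edges by profile into ordinary \emph{graphs} $G_S$, $S\subseteq[k]$, and uses the local star colourings $\psi_v$ only to drive a random \emph{list sparsification} (\cref{lem:list-partitition}): at each vertex $v$, every colour $c$ is mapped independently and uniformly into $[\gamma\Delta]$, and an edge $e\in G_S$ retains $c$ only if at both endpoints the image lands on a $\psi$-colour used by some $G_S$-edge there. This forces edges of $G_S$ and $G_{S'}$ with $S\cap S'\neq\emptyset$ at a common vertex to receive disjoint sublists, so the $G_S$ can be coloured independently. Each $G_S$ is then handled by a weighted local list-edge-colouring theorem for \emph{graphs} (\cref{thm:BDLP}, Bonamy--Delcourt--Lang--Postle, building on Kahn), where the nibble runs on $2$-uniform objects and the large-codegree obstruction never arises.
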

	
	The function $\nu(k)$ captures the size of a largest fractional matching among all intersecting hypergraphs on $k$ vertices.
	As it turns out, this function is related to a variety of combinatorial problems.
	It was first studied by Mills~\cite{mills1979covering} and later independently by Horák and Sauer~\cite{horak1992covering}, who determined its initial values:
	\begin{align*}
		\centering
		\def\arraystretch{1.5}
		\begin{tabular}{c|ccccccccccccccc}
			$k$      & 1   & 2   & 3             & 4             & 5             & 6   & 7             & 8                & 9             & 10             & 11             & 12 & 13              \\
			\hline
			$\nu(k)$ & $1$ & $1$ & $\frac{3}{2}$ & $\frac{5}{3}$ & $\frac{9}{5}$ & $2$ & $\frac{7}{3}$ & $ \frac{ 12}{5}$ & $\frac{5}{2}$ & $ \frac{8}{3}$ & $\frac{14}{5}$ & 3  & $ \frac{13}{4}$
		\end{tabular}.
	\end{align*}
	
	While it is not hard to compute $\nu(k)$ for small $k$, we do not have an exact  expression for all~$k$. {Indeed, finding such an expression appears to require determining the integers $q$ for which there exists a projective plane of order $q$, and hence may be a very hard problem.}
	However, its asymptotic behaviour is generally understood:
	\begin{equation}\label{eq:nu-bounds}
		\sqrt{k}-o(\sqrt{k})\leq \nu(k) \leq \sqrt k.
	\end{equation}
	A more comprehensive discussion of the function $\nu(k)$ can be found in \cref{sec:stars}.
	For now,  we conclude that these results asymptotically resolve the simultaneous edge-colouring problem and in particular Cabello's conjecture.
	
	\medskip
	
	Next, we turn to a related problem.
	Cambie~\cite{cambie2022extremal} proposed the following natural variant of the simultaneous colouring question in the setting where each edge appears  in only a bounded number of the graphs.
	
	\begin{conjecture}[Cambie]\label{con:cambie}
		Let $G_1,\dots,G_k$ be graphs of maximum degree $\Delta$, where every edge appears at most $\ell$ times.
		Then $\chi'(G_1,\dots,G_k) \leq \rho(\ell) \Delta +o(\Delta)$.
	\end{conjecture}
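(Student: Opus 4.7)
The plan is to follow the template used for \cref{thm:simultaneous-colourings}: reformulate the simultaneous colouring problem as a strong vertex colouring of a suitable hypergraph, prove a sharper single-vertex (``local'') bound that exploits the hypothesis $|I(e)|\le\ell$, and then push this local bound through the global semi-random / nibble machinery. Set $G:=G_1\cup\cdots\cup G_k$ and, for each $e\in E(G)$, put $I(e):=\{i:e\in G_i\}$, so $|I(e)|\le\ell$. Form the hypergraph $\mathcal H$ on vertex set $V(\mathcal H):=E(G)$ with hyperedges $E_i(v):=\{e\in E(G_i):v\in e\}$ for every $(v,i)\in V(G)\times[k]$ with $E_i(v)\ne\emptyset$. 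A simultaneous proper edge colouring of $(G_1,\dots,G_k)$ is exactly a strong vertex colouring of $\mathcal H$, equivalently a proper vertex colouring of its $2$-shadow $C$. Each hyperedge has size at most $\Delta$, every vertex of $\mathcal H$ lies in at most $2\ell$ hyperedges, and $\Delta(C)\le 2\ell(\Delta-1)$. The key structural fact I would first establish is $\omega(C)\le \ell\Delta+O(1)$: a clique in $C$ of size at least four must, by the classical sunflower argument for pairwise intersecting graph edges, consist of edges of $G$ through a common vertex $v$, and for such a star $S\subseteq E_G(v)$ a double count gives
\[
\binom{|S|}{2}\;\le\;\sum_{i\in[k]}\binom{|S\cap E_i(v)|}{2}\;\le\;\frac{\Delta-1}{2}\sum_{e\in S}|I(e)|\;\le\;\frac{\ell(\Delta-1)}{2}|S|.
\]
The same double count shows that the single-vertex (``star'') version of the problem is solvable with $\ell\Delta+O(1)$ colours, which is the bound replacing $\nu(k)\Delta$ in the proof of \cref{thm:simultaneous-colourings}.

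With this local bound in hand, I would take a palette of size $N:=\lceil(1+\varepsilon)\ell\Delta\rceil$ and feed $\mathcal H$ into the R\"odl / Johansson--Molloy--Reed style semi-random colouring procedure used in the proof of \cref{thm:simultaneous-colourings}. Over $T=O_\varepsilon(\log\log\Delta)$ rounds, every still-uncoloured edge tentatively picks a uniform colour from its current list, keeps it iff no other edge in any hyperedge containing it chose the same colour, and afterwards permanently used colours are struck from the lists of their neighbours. Because each vertex of $\mathcal H$ has bounded degree $2\ell$ and the codegrees are controlled---two hyperedges meet in at most $\Delta$ edges when they share a common $v\in V(G)$, and in at most one edge otherwise---standard Talagrand / McDiarmid bounds show that list sizes and effective conflict degrees remain tightly concentrated about their expectations throughout. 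After the nibble, each uncoloured edge retains $\Omega_\varepsilon(\Delta)$ colours while its live conflict degree has dropped to $o(\Delta)$, so a greedy / Vizing completion finishes the colouring.

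The chief technical obstacle is bridging the gap between the maximum degree $\Delta(C)\approx 2\ell\Delta$ and the clique number $\omega(C)\approx\ell\Delta$: a naive Vizing / greedy bound on $C$ uses only the degree and is off by a factor of two. Squeezing the palette down to $(1+o(1))\omega(C)$ requires exploiting that each vertex's $C$-neighbourhood decomposes into $2\ell$ cliques of size below $\Delta$ (one per hyperedge through it), so that at each step of the nibble the expected number of colours removed from a list is governed by the clique number rather than the degree. Adapting the concentration machinery used in the proof of \cref{thm:simultaneous-colourings} to the non-linear hypergraph $\mathcal H$---with the correct codegree bookkeeping---is where essentially all of the work lies, and is precisely where the hypothesis $|I(e)|\le\ell$ is crucially required.
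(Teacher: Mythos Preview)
Your approach targets the wrong bound. You aim for a palette of size $(1+\varepsilon)\ell\Delta$, and your clique/double-count argument indeed gives $\omega(C)\le \ell(\Delta-1)+1$; but the statement you are asked to prove is $\chi'(G_1,\dots,G_k)\le \rho(\ell)\Delta+o(\Delta)$, and in general $\rho(\ell)<\ell$. Already for $\ell=2$ we have $\rho(2)=3/2$ by Shannon, so the conjecture promises $1.5\Delta+o(\Delta)$ colours while your scheme would only ever reach $2\Delta+o(\Delta)$. Your ``local'' (star) bound is just the trivial degree bound on the chromatic index of the profile hypergraph, not the optimal one, so even a perfect local-to-global transfer cannot recover $\rho(\ell)\Delta$ from it. Note also that the nibble you describe is aimed at matching $\omega(C)$, not at matching the local chromatic number; these two quantities need not coincide for hypergraph line graphs, so tightening your clique bound would still not obviously yield the correct global bound.

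The paper's route is quite different and avoids this entirely. It first proves a general stars-to-graphs reduction (\cref{thm:stars-to-graphs}): if every collection of stars $R_1(v),\dots,R_k(v)$ at a vertex $v$ can be simultaneously coloured with $\gamma\Delta$ colours, then $G_1,\dots,G_k$ can be simultaneously coloured with $\gamma\Delta+o(\Delta)$ colours. The reduction is carried out via a list-sparsification lemma together with the Bonamy--Delcourt--Lang--Postle local edge-colouring theorem, not a direct nibble on your hypergraph $\mathcal H$. Once this reduction is in place, the star case is \emph{exactly} the problem of edge-colouring the profile hypergraph $H(v)$, which has maximum degree $\Delta$ and all edges of size at most $\ell$; by the very definition of $\rho(\ell)$ (\cref{def:rho}) one has $\chi'(H(v))\le\rho(\ell)\Delta+o(\Delta)$, and the conjectured bound follows. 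In other words, the paper never tries to compute $\rho(\ell)$: it shows that whatever the optimal $\ell$-uniform hypergraph edge-colouring bound is, the simultaneous-colouring bound matches it. Your direct approach forfeits this by replacing $\rho(\ell)$ with the crude upper bound $\ell$ at the local step.
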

	
	Note that the conjectured answer is independent of the number of graphs involved.
	For~$\ell=1$, the conjecture reduces to Vizing's theorem, as in this case the graphs are edge-disjoint.
	Moreover, the bounds in \cref{con:cambie} are tight whenever $\ell = \Delta = q+1$ and there is a projective plane of order~$q$~\cite[Section~8.3]{cambie2022extremal}.
	Our work confirms this conjecture asymptotically for all fixed~$k$ and~$\ell$.
	Again, we have an asymptotically tight result, and we state it in terms of a parameter~$\rho(\ell)$ that will be defined precisely in~\cref{sec:stars}.
	
	\begin{theorem}\label{thm:simultaneous-colourings-bounded}
		{Let $k\geq \ell\geq 1$ be fixed integers. All} graphs $G_1,\dots,G_k$ of maximum degree at most $\Delta$, where every edge appears at most $\ell$ times, satisfy $\chi'(G_1,\ldots,G_k) \leq \rho(\ell) \Delta +o( \Delta)$.
		Furthermore, for every $\eps > 0$ and $\Delta_0$, there are $\Delta$-regular stars $G_1,\dots, G_k$ with common centre and $\Delta\geq \Delta_0$ such that every edge appears at most $\ell$ times and $\chi'(G_1,\dots,G_k) \geq \rho(\ell)\Delta - \eps \Delta$.
	\end{theorem}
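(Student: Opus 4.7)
The plan parallels the proof of Theorem~\ref{thm:simultaneous-colourings}, with the edge-multiplicity bound $\ell$ playing the role of the number-of-graphs bound $k$.

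For the lower bound, fix $\eps > 0$. By the hypergraph-theoretic characterization of $\rho(\ell)$ to be given in Section~\ref{sec:stars}, I can find a finite extremal hypergraph $H_0$ with vertex-multiplicity at most $\ell$ which achieves a value of the relevant ratio of at least $\rho(\ell)-\eps/2$. I would realize $H_0$ as a family of $\Delta$-regular stars $G_1,\dots,G_k$ sharing a common centre $v$: each hyperedge of $H_0$ becomes a star $G_i$, and leaves of different stars are identified according to the incidence structure of $H_0$ (with suitable integer multiplicities to achieve $\Delta$-regularity). Every edge then appears in at most $\ell$ of the stars, and a short LP-duality argument at $v$ yields $\chi'(G_1,\dots,G_k) \geq (\rho(\ell)-\eps)\Delta$, as required.

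For the upper bound, I would translate the simultaneous colouring problem into a proper colouring of the auxiliary conflict graph $\cG$ whose vertex set is $E(G_1\cup\dots\cup G_k)$ and whose edges join pairs of edges that are incident in some common $G_i$. At each vertex $v$ of the original graphs, the local structure in $\cG$ is governed by a hypergraph on the edges through $v$ whose hyperedges are the local stars of the graphs $G_i$ containing $v$; this hypergraph has vertex-multiplicity at most $\ell$, so by the definition of $\rho(\ell)$ its fractional chromatic index is at most $(\rho(\ell)+o(1))\Delta$. A semi-random nibble argument (in the spirit of Pippenger--Spencer and Kahn, analogous to the proof of Theorem~\ref{thm:simultaneous-colourings}) then converts this local fractional bound into a global proper colouring with $\rho(\ell)\Delta+o(\Delta)$ colours.

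The main obstacle is verifying the codegree hypotheses needed to apply such a Kahn/Pippenger--Spencer-type asymptotic chromatic-index theorem: one must show that every pair of adjacent vertices in $\cG$ has only $o(\Delta)$ common neighbours, so that the local fractional chromatic index extends to a global proper colouring with only lower-order overhead. Crucially, the multiplicity-$\ell$ assumption bounds these codegrees uniformly in $k$, which is precisely why the answer is governed by $\rho(\ell)$ (a function of $\ell$ alone) rather than by $\nu(k)$ as in Theorem~\ref{thm:simultaneous-colourings}.
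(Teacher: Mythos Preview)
Your upper-bound strategy has a genuine gap. The conflict graph $\cG$ does \emph{not} have codegree $o(\Delta)$: if $uv,uw\in G_i$, then every other edge of $G_i$ incident to $u$ is a common neighbour of $uv$ and $uw$ in $\cG$, giving codegree $\Theta(\Delta)$ already from a single $G_i$. The multiplicity bound $\ell$ plays no role here, so your claim that it ``bounds these codegrees uniformly in $k$'' is incorrect, and a direct Pippenger--Spencer/Kahn argument on $\cG$ does not go through. The paper avoids this obstacle entirely: it first invokes Theorem~\ref{thm:stars-to-graphs}, which reduces the global problem to bounding $\chi'(R_1(v),\dots,R_k(v))$ at each vertex, and the proof of that reduction (Theorem~\ref{thm:stars-to-graphs-list}) uses a \emph{list sparsification} step (Lemma~\ref{lem:list-partitition}) to make the lists of edges with different profiles disjoint, then applies the weighted local list-colouring theorem of Bonamy--Delcourt--Lang--Postle separately on each profile class $G_S$. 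Once reduced to stars, Observation~\ref{obs:tars-multihypergraphs-index} identifies the problem with edge-colouring the profile hypergraph, which has edge sizes at most $\ell$, and the definition of $\rho(\ell)$ finishes.

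Your lower-bound sketch also has the correspondence inverted. In the profile hypergraph of stars with a common centre, the \emph{vertices} are the indices $i\in[k]$ (one per graph $G_i$), and each leaf edge $e$ contributes the \emph{hyperedge} $\{i:e\in G_i\}$; ``every edge appears at most $\ell$ times'' translates to \emph{hyperedge size} at most $\ell$, not ``vertex-multiplicity at most $\ell$''. Moreover $\rho(\ell)$ is defined directly via the chromatic index (Definition~\ref{def:rho}), so no LP-duality is needed: one simply takes an $\ell$-uniform hypergraph $H$ with $\chi'(H)\ge(\rho(\ell)-\eps)\Delta(H)$ and reads off the corresponding stars via Observation~\ref{obs:tars-multihypergraphs-index}.
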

	
	The function $\rho(\ell)$ captures the best possible bound on the chromatic index of an $\ell$-uniform multihypergraph with maximum degree $\Delta$.  This is a classic problem dating back to at least 1972, when it was posed by Faber and Lov\'asz~\cite{fl}. An old result of Shannon tells us that $\rho(2)=3/2$, but again, determining the exact solution for every $\ell$ appears to be very hard. The following bounds are straightforward and are enough to establish Cambie's conjecture:
	\begin{align}\label{eq:rho-bounds}
		\ell-o(\ell)\leq \rho(\ell) \leq \ell.
	\end{align}
	
	\medskip
	Recall that the extremal examples for both \cref{thm:simultaneous-colourings,thm:simultaneous-colourings-bounded} consist of a collection of stars centred at the same vertex.
	As it turns out, this is not a coincidence.
	Our main technical contribution asymptotically reduces the general problem of simultaneous edge-colouring to the special case of stars.
	For each vertex $v$, we write $R_1(v),\dots, R_k(v)$ for the stars centred at~$v$ in $G_1,\dots, G_k$, respectively.
	By convention, if $v \notin V(G_i)$, then $R_i(v)$ is the trivial star.
	
	\begin{theorem}\label{thm:stars-to-graphs}
		Let $G_1,\dots, G_k$ be graphs of maximum degree at most  $\Delta$ and {$\gamma \geq 0$.} Suppose that, for every vertex $v$, the stars $R_1(v),\dots, R_k(v)$ satisfy $\chi'(R_1(v),\dots, R_k(v))\leq \gamma\Delta$. Then  $\chi'(G_1,\dots, G_k)\leq \gamma\Delta + o(\Delta)$.
	\end{theorem}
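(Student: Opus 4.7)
The plan is to invoke the semi-random (``nibble'') method, using the local star colourings guaranteed by the hypothesis as ``templates'' that drive a randomised extension to a global colouring. After a standard regularisation one may assume that every $G_i$ is $\Delta$-regular on a common vertex set. Set $C := \lceil \gamma \Delta \rceil$; by hypothesis, for each vertex $v$ we fix a simultaneous proper colouring $\psi_v \colon \bigcup_i E(R_i(v)) \to [C]$ of the stars at $v$. Crucially, the restriction of $\psi_v$ to any subfamily of its edges remains a simultaneous proper star colouring, so the hypothesis is automatically preserved under edge deletion throughout the process.

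A single round of the nibble proceeds as follows. Independently at each vertex $v$, sample a uniformly random permutation $\pi_v$ of $[C]$ and define $\phi_v(e) := \pi_v(\psi_v(e))$ for each currently uncoloured edge $e$ at $v$. An edge $e = uv$ becomes a \emph{candidate with trial colour $c$} if $\phi_u(e) = \phi_v(e) = c$; since $\pi_u$ and $\pi_v$ are independent, this happens with probability $1/C$ for each $e$ (and the trial colour is uniform in $[C]$). Commit the trial colour to $e$ unless some other candidate incident to $e$ in some common $G_i$ selects the same colour; by construction, all committed colours form a simultaneous proper partial colouring. Standard concentration (Talagrand / McDiarmid) combined with the Lov\'asz Local Lemma, applied with a dependency graph of degree $\poly(\Delta)$, shows that in each round a constant fraction of the surviving edges is permanently coloured, with the residual maximum degree shrinking by a multiplicative factor bounded away from $1$.

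Iterating the nibble $O(\log \Delta)$ times leaves residual graphs $G_1', \dots, G_k'$ of maximum degree $o(\Delta)$; finishing off by applying Vizing's theorem to the union $G_1' \cup \dots \cup G_k'$ uses a further $k \cdot o(\Delta) + 1 = o(\Delta)$ fresh colours. In total, the construction uses at most $\gamma\Delta + o(\Delta)$ colours, as required.

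\textbf{Main obstacle.} The principal technical hurdle lies in the interplay between the randomised assignments and the local hypothesis. Two families of ``bad'' events must be controlled simultaneously: \emph{colour saturation} at a vertex (a specific colour becomes blocked at $v$ by too many commitments from adjacent vertices, destroying availability from the target set $\pi_v(\psi_v(\cdot))$ in subsequent rounds) and \emph{edge starvation} (an uncoloured edge loses too many of its potential trial colours). The $o(\Delta)$ slack above the tight value $\gamma\Delta$ is consumed by absorbing both kinds of events; this requires carefully quantifying the correlations introduced by reusing the fixed $\psi_v$ across rounds, so that the dependency graph of ``failure'' events in the Lov\'asz Local Lemma has the polynomial-in-$\Delta$ maximum degree needed to close the analysis.
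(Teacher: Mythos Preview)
Your mechanism does not deliver the progress you claim. An edge $e=uv$ becomes a candidate only when $\pi_u(\psi_u(e))=\pi_v(\psi_v(e))$, which happens with probability exactly $1/C=\Theta(1/\Delta)$; moreover, as you implicitly note, two candidates sharing a vertex in some $G_i$ can \emph{never} receive the same trial colour (since $\pi_u$ is injective and $\psi_u$ separates them), so the uncommit step is vacuous in round one. Thus a single round colours only a $\Theta(1/\Delta)$ fraction of the edges, and the residual maximum degree drops additively by $O(1)$, not multiplicatively. You therefore need $\Theta(\Delta)$ rounds rather than $O(\log\Delta)$, and across that many rounds the concentration and Local Lemma bookkeeping you appeal to does not close. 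A second, related gap: from round two onward a candidate $f=uw$ may receive a trial colour already committed on some earlier $e=uv\in G_i$, because $\pi_u^{(t)}$ knows nothing about $\pi_u^{(s)}$ for $s<t$. Handling this requires genuinely tracking forbidden colours, at which point you are running a nibble with palette size $C=\lceil\gamma\Delta\rceil$ and conflict degree essentially $\gamma\Delta$ --- i.e.\ with \emph{no} multiplicative slack. Standard nibble analyses (Kahn and its descendants) need a $(1+\eps)$ factor in the palette; your $o(\Delta)$ extra colours are spent only on the Vizing cleanup and are not available inside the iteration.

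The paper's route avoids both issues by a structural reduction rather than a bespoke nibble. It partitions $G$ into the profile graphs $G_S$, and uses the star colourings $\psi_v$ not as trial colours but to \emph{sparsify lists}: a random map $\sigma_v\colon L(v)\to[(1+\eps)\gamma\Delta]$ is chosen, and an edge $uv\in G_S$ keeps only those colours $c$ with $\sigma_u(c)\in\psi_u(G_S)$ and $\sigma_v(c)\in\psi_v(G_S)$. The point is that edges in $G_S$ and $G_{S'}$ sharing a vertex in some $G_i$ (so $S\cap S'\neq\emptyset$) then receive \emph{disjoint} sublists, deterministically. Each $G_S$ can now be coloured independently from its sublists by a black-box weighted local edge-colouring theorem (Bonamy--Delcourt--Lang--Postle, built on Kahn), with the required $(1+\eps)$ slack coming from the list size $(1+O(\eps))\gamma\Delta$. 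This buys the list-colouring strengthening for free and sidesteps any cross-round interaction.
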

	
	Given this, the proofs of \cref{thm:simultaneous-colourings,thm:simultaneous-colourings-bounded} follow from the corresponding statements for stars, which we establish in \cref{sec:stars}.
	
	\medskip
	
	\cref{thm:stars-to-graphs} extends to the more general list colouring setting.
	Since the precise statement is somewhat technical, we defer the details to \cref{sec:main_proof} {(see \cref{thm:stars-to-graphs-list})}.
	Instead, let us highlight the following consequence.
	
	Given a collection of lists $\set{L(e)\colon  e\in G}$ for a graph $G$, an \emph{$L$-edge-colouring} is a colouring $\phi\colon E(G)\to \bigcup_{e\in G} L(e)$ such that $\phi(e) \in L(e)$ for every {$e\in G$}; an $L$-edge-colouring is \emph{proper} if no two incident edges receive the same colour.
	For graphs $G_1,\dots, G_k$ with union $F = G_1 \cup \dots \cup G_k$, we define the \emph{simultaneous list chromatic index} $\chi'_{\lst}(G_1,\dots,G_k)$ as the smallest integer~$r$ such that, for every collection of lists $\set{L(e)\colon  e\in F}$ with $|L(e)| \geq r$ for each~$e \in F$, the union $F$ admits an edge-colouring that is a proper $L$-edge-colouring for each individual~$G_i$.
	
	The notorious List Colouring Conjecture, which emerged during the 1970s and 1980s~\cite{jensen2011graph}, states that $\chi'_{\lst}(G)$ always coincides with $\chi'(G)$. The conjecture  was  confirmed asymptotically in the seminal work of Kahn~\cite{kahn1996asymptotically}.
	We show that the simultaneous colouring problem exhibits a similar asymptotic behaviour.
	
	\begin{theorem}\label{thm:LLC-simultaneous}
		Let $k\geq 1$ be a fixed integer.
		Let $G_1,\dots,G_k$ be graphs of maximum degree at most~$\Delta$.
		Then $\chi'_{\lst}(G_1,\dots,G_k) = \chi'(G_1,\dots,G_k) +o(\Delta)$.
	\end{theorem}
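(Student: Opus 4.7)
The plan is to combine the list version of the star-reduction \cref{thm:stars-to-graphs} (namely \cref{thm:stars-to-graphs-list}) with an asymptotic list-colouring statement for stars sharing a common centre. The inequality $\chi'(G_1,\dots,G_k) \leq \chi'_{\lst}(G_1,\dots,G_k)$ is immediate by choosing every list to be a common ground set of size $\chi'_{\lst}$, so the substantive task is to prove $\chi'_{\lst}(G_1,\dots,G_k) \leq \chi'(G_1,\dots,G_k) + o(\Delta)$. Suppose we can establish, for every vertex $v$, the inequality $\chi'_{\lst}(R_1(v),\dots,R_k(v)) \leq \chi'(R_1(v),\dots,R_k(v)) + o(\Delta)$. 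Since the restriction of any simultaneous colouring of $G_1,\dots,G_k$ to the edges incident to $v$ is a simultaneous colouring of $R_1(v),\dots,R_k(v)$, we have $\chi'(R_1(v),\dots,R_k(v)) \leq \chi'(G_1,\dots,G_k)$, and therefore $\chi'_{\lst}(R_1(v),\dots,R_k(v)) \leq \chi'(G_1,\dots,G_k) + o(\Delta)$ uniformly in $v$. Feeding this bound into \cref{thm:stars-to-graphs-list} then yields the desired inequality.

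To handle the star sub-problem, I would recast it as an edge-colouring problem for a multihypergraph on $k$ vertices. Fix a vertex $v$ and define a hypergraph $H_v$ with $V(H_v) = [k]$ in which each edge $e \in R_1(v) \cup \dots \cup R_k(v)$ contributes a hyperedge $F_e = \{i : e \in R_i(v)\}$, counted with multiplicity. By construction, simultaneous (list-)edge-colourings of the stars $R_1(v),\dots,R_k(v)$ correspond exactly to proper (list-)edge-colourings of $H_v$. Moreover $H_v$ has maximum degree at most $\Delta$, each hyperedge has size at most $k$, and there are at most $2^k-1$ distinct hyperedge types. It therefore suffices to prove the asymptotic List Colouring Conjecture for multihypergraphs on a constant number of vertices with bounded edge size.

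The main obstacle is this last step. Because $H_v$ admits arbitrarily high multi-edges within the $2^k-1$ possible hyperedge types, it is typically very far from linear, so Kahn's classical nibble theorems for (linear) hypergraphs do not apply as a black box. A plausible route is a semi-random colouring procedure that processes multi-edges of each type in parallel, using Talagrand or Azuma concentration to control the conflicts at each of the $k$ vertices; since $k$ is constant, the concentration thresholds are effectively one-dimensional. Alternatively, one can pass to the fractional relaxation, a finite-dimensional LP whose optimum should coincide with both the integer chromatic index and the list chromatic index up to an $o(\Delta)$ additive error in this setting. Either approach, combined with \cref{thm:stars-to-graphs-list}, completes the proof.
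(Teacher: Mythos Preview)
You have misread the hypothesis of \cref{thm:stars-to-graphs-list}. That theorem assumes only a bound on the \emph{ordinary} simultaneous chromatic index of the stars, $\chi'(R_1(v),\dots,R_k(v))\leq\gamma\Delta$, and from this alone concludes that lists of size $(1+2^{k+4}\eps)\gamma\Delta$ suffice for a simultaneous $L$-edge-colouring of the full graphs. In other words, the passage from $\chi'$ to $\chi'_{\lst}$ is already built into \cref{thm:stars-to-graphs-list}; you do not need a separate asymptotic List Colouring Conjecture for the stars. The paper's proof is accordingly three lines: set $\gamma\Delta=\max_v\chi'(R_1(v),\dots,R_k(v))$, observe (exactly as you do) that $\gamma\Delta\leq\chi'(G_1,\dots,G_k)$ by restriction, and apply \cref{thm:stars-to-graphs-list} directly to obtain
\[
\gamma\Delta\le\chi'(G_1,\dots,G_k)\le\chi'_{\lst}(G_1,\dots,G_k)\le(\gamma+2^{k+4}\eps)\Delta.
\]

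Your detour through the ``star sub-problem'' --- proving the asymptotic List Colouring Conjecture for multihypergraphs on $k$ vertices with large multiplicity --- is therefore unnecessary. It is also the only part of your write-up that is not actually a proof: you correctly note that Kahn's linear-hypergraph results do not apply, and you gesture at a semi-random procedure or a fractional-relaxation argument, but neither is carried out. Had this step genuinely been required, the proposal would have a real gap here. As it stands, the gap is avoidable: simply reread the hypothesis of \cref{thm:stars-to-graphs-list} and drop everything after your restriction inequality $\chi'(R_1(v),\dots,R_k(v))\leq\chi'(G_1,\dots,G_k)$.
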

	
	Lastly, we note that the precise analogue of the List Colouring Conjecture fails in the simultaneous setting.
	This follows from a construction adapted from the work of Yannick Mogge and Olaf Parczyk~\cite{pmpersonal} (see \cref{prop:list_counter}).

	\subsection*{Organisation of the paper}
	The rest of the paper is structured as follows.
	In \cref{sec:stars}, we formally introduce, discuss and bound the functions $\nu(k)$ and~$\rho(\ell)$.
	In \cref{sec:main_proof}, we prove the list version of \cref{thm:stars-to-graphs}, as stated in \cref{thm:stars-to-graphs-list}.
	In \cref{sec:constructions}, we provide various constructions for the lower bounds mentioned above.
	Finally, we conclude the paper with a few remarks and open questions in Section~\ref{sec:conclusion}.

	\section{Aiming for the stars}
	\label{sec:stars}
	
	In the following, we present explicit bounds for the simultaneous chromatic index of stars with a common centre.
	It turns out that simultaneously edge colouring stars is equivalent to  edge-colouring  hypergraphs, and consequently is related to a number of previously studied questions.
	{A \emph{hypergraph} $H$ consists of \emph{vertices} $V(H)$ and \emph{edges} $E(H)$, where each edge is a set of vertices.
	We often identify the set of edges with $H$, writing $e \in H$.
	The \emph{degree} of a vertex $v$ is the number of edges containing $v$.
	We write $\Delta(H)$ for the maximum degree (over all vertices) of $H$.
	The concepts of proper edge-colourings (no two intersecting edges receiving the same colour) and the chromatic index generalise straightforwardly to hypergraphs. {In a \emph{multihypergraph}, we allow repeated edges; all other notions carry over in the natural way.}
	
	\begin{definition}[Profile]\label{def:profile}
		Given graphs $G_1,\dots, G_k$, the \emph{profile} of an edge $e \in G_1\cup\dots\cup G_k$ is the set $P(e) = \{ i \in [k]\colon  e \in G_i \}$.
		For a set $S \subset [k]$, we define $G_S \subset G_1\cup\dots\cup G_k$ as the graph of all edges with profile $S$.
		
		The \emph{profile hypergraph} $\PF(G_1,\dots,G_k)$ is the (multi-)hypergraph on vertex set~$[k]$ with an edge $p(e)$ for each $e \in G_1\cup\dots\cup G_k$.
		
	\end{definition}
	
	Note that, if $G_i$ is a star, then the vertex $i$ has degree $\Delta(G_i)$ in the profile hypergraph.
	Furthermore, given a hypergraph~$H$ on $[k]$, one can define stars $G_1,\dots, G_k$ with common centre such that  $P(G_1,\dots, G_k) = H$ and $\Delta(G_i)$ is the degree of vertex~$i$ in~$H$.
	In fact, the following correspondence is immediate from the definition.
	
	\begin{observation}\label{obs:tars-multihypergraphs-index}
		For stars $G_1,\dots,G_k$ with a common centre and $H=\PF(G_1,\dots,G_k)$, we have $\chi'(G_1,\dots,G_k) = \chi'(H)$.
	\end{observation}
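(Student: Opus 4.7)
The plan is to verify the claimed equality by checking that a simultaneously proper edge-colouring of $G_1,\dots,G_k$ is literally the same object as a proper edge-colouring of the profile hypergraph $H=\PF(G_1,\dots,G_k)$, under the natural bijection between the edges of $G_1\cup\dots\cup G_k$ and the edges of $H$.

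First I would unpack what it means for two edges of a star $G_i$ with centre $v$ to be incident. Since every edge of $G_i$ contains $v$, any two distinct edges of $G_i$ share the vertex $v$, hence are incident. Consequently, a proper edge-colouring of $G_i$ is precisely an assignment of pairwise distinct colours to the edges of $G_i$. Thus a simultaneously proper edge-colouring of $G_1,\dots,G_k$ is nothing other than a colouring $\phi\colon E(G_1\cup\dots\cup G_k)\to\NATS$ such that, for each $i\in[k]$, the restriction $\phi|_{E(G_i)}$ is injective.

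Next I would invoke the bijection $e\mapsto p(e)$ from \cref{def:profile}, which identifies the edges of $G_1\cup\dots\cup G_k$ with the edges of $H$. By the definition of the profile, $e\in G_i$ if and only if $i\in p(e)$; equivalently, two edges $e,e'$ lie together in some $G_i$ if and only if their corresponding hyperedges $p(e),p(e')$ share the vertex $i$ in $H$. Hence the injectivity condition ``$\phi|_{E(G_i)}$ is injective for every $i$'' translates verbatim into the requirement that any two hyperedges of $H$ sharing a vertex receive distinct colours, i.e.\ that $\phi$ is a proper edge-colouring of $H$.

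Combining these two paragraphs, the sets of simultaneously proper edge-colourings of $G_1,\dots,G_k$ and of proper edge-colourings of $H$ coincide as colourings of the same underlying set, so their minimum numbers of colours agree, giving $\chi'(G_1,\dots,G_k)=\chi'(H)$. There is no real obstacle here: the whole content of the observation is the dictionary between stars with a common centre and (multi-)hypergraphs, and the argument is a direct translation under this dictionary.
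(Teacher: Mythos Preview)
Your argument is correct and is exactly the unpacking the paper has in mind; the paper simply records the observation as ``immediate from the definition'' without writing out a proof, and your proposal is a faithful expansion of that remark.
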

	
	We can thus focus on bounding the chromatic index of the profile hypergraph.  Our two main theorems restrict this hypergraph in different ways: \cref{thm:simultaneous-colourings} bounds the number of vertices, and \cref{thm:simultaneous-colourings-bounded} bounds the edge-sizes. We discuss each of these restrictions in the next two subsections.
	
	\subsection{Bounded edge-sizes}
	
	It was shown by Shannon that every multigraph with maximum degree $\Delta$ has chromatic index at most $\lfloor\frac{3}{2}\Delta\rfloor$ and that this bound is tight.  More than 50 years ago, Faber and Lov\'{a}sz~\cite{fl} asked about the best possible bound for \emph{$\ell$-uniform hypergraphs}, namely hypergraphs in which every edge contains exactly $\ell$ vertices.
	Füredi, Kahn and Seymour~\cite{furedi1993fractional} conjectured the following improvement over the trivial bound of roughly $\ell\Delta$:
	
	\begin{conjecture}[Füredi, Kahn and Seymour]\label{con:alon-kim}
		For every fixed integer $\ell$ and every sufficiently large $\Delta$, every $\ell$-uniform multihypergraph $H$ of maximum degree at most $\Delta$ satisfies $\chi'(H) \leq (\ell-1 +1/\ell) \Delta +o(\Delta)$.
	\end{conjecture}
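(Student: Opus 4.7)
The natural plan is Kahn's semi-random (nibble) paradigm. The argument splits into a sharp fractional bound on the chromatic index and a probabilistic transfer to the integer chromatic index.

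For the fractional step, I would aim to show that every $\ell$-uniform multihypergraph $H$ of maximum degree $\Delta$ satisfies $\chi'_f(H) \leq (\ell - 1 + 1/\ell)\Delta$. By LP duality this is a statement about matching covers in $H$, and it is tight precisely at the projective plane of order $\ell - 1$ that appears in the lower-bound discussion following \cref{thm:simultaneous-colourings-bounded}. A natural route is to take an extremal fractional matching, apply complementary slackness to expose a projective-plane-like substructure whenever the bound is threatened, and resolve it directly.

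The second step would then convert the fractional bound into a proper edge-colouring via an iterated nibble. In each round, assign every uncoloured edge a random colour from a shrinking palette of current size $C$, retain each assignment whenever no intersecting edge competes, and delete the successfully coloured edges. Standard concentration (Azuma/Talagrand) should show that the residual maximum degree and the effective palette shrink by comparable factors per round, so after $O(\log \Delta)$ rounds only $\Delta^{1-\Omega(1)}$ degree remains; this is absorbed greedily within the $o(\Delta)$ slack.

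The main obstacle is the fractional bound. For linear hypergraphs (codegree at most one), Kahn's methods already yield the much stronger bound $\chi'(H) \leq (1+o(1))\Delta$, so the difficulty is genuinely concentrated in dense local structure: the slack factor $1/\ell$ in the conjecture is saturated precisely by designs, and any proof must interact with these. This is why the conjecture has resisted attack since the early 1990s, and any genuinely new progress seems to hinge on a structural understanding of $\ell$-uniform multihypergraphs that nearly attain the projective-plane extremum, a question closely tied to open problems in design theory.
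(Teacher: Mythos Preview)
This statement is presented in the paper as an \emph{open conjecture}, not a theorem; there is no proof in the paper to compare against. The paper explicitly notes that ``for general $\ell$, there is still no improvement over the trivial bound $\rho(\ell)\leq \ell$.'' So any correct proof here would be a genuine advance, and your proposal should be read against that backdrop.

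Your Step~1 (the fractional bound $\chi'_f(H)\leq(\ell-1+1/\ell)\Delta$) is not an obstacle at all: as the paper remarks immediately after stating the conjecture, F\"uredi, Kahn and Seymour themselves already proved exactly this. So there is nothing to do here, and in particular no need for the complementary-slackness argument you sketch.

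The entire difficulty lies in your Step~2, and here the proposal has a real gap. Kahn's nibble converts a fractional edge-colouring into an integer one \emph{only} when the hypergraph has small pairwise codegree (or, more generally, when the matchings in the fractional cover are ``spread out'' enough that local dependencies stay under control). For an arbitrary $\ell$-uniform multihypergraph the codegree can be as large as~$\Delta$; in the projective-plane blow-up that witnesses tightness, every two edges intersect. Your concentration step (``the residual maximum degree and the effective palette shrink by comparable factors'') is exactly where this fails: when an edge $e$ meets $\Theta(\ell\Delta)$ others, the survival probability of a tentative colour is not governed by the palette size in the way the nibble analysis requires, and the standard martingale bounds do not give the needed decay. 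You effectively concede this in your final paragraph, noting that the linear case is easy and that ``the difficulty is genuinely concentrated in dense local structure''---but the body of the proposal offers no mechanism for handling that structure. As it stands, the plan reproves the known fractional result and then stalls at precisely the point that has kept the conjecture open since 1993.
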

	
	They proved that their conjectured bound holds for the fractional chromatic index.
	For \emph{intersecting} hypergraphs, in which by definition every pair of edges shares at least one vertex, the conjecture follows from the work of F\"uredi~\cite{furedi1993fractional}.
	Alon and Kim~\cite{alon1997degree} posed a more general conjecture and also proved it holds for intersecting hypergraphs.
	
	{\cref{con:alon-kim}}, if true, is asymptotically tight whenever there is  a projective plane of order~$\ell-1$.
	To see this, consider the following construction: Let~$\Delta$ be a large integer divisible by $\ell$ and $k = \ell^2  - \ell +1$.
	Fix a projective plane $P$ of order $\ell-1$ on $k$ vertices and with lines~$e_1,\dots,e_k$.
	We obtain a multihypergraph $H$ by replacing each line $e_i$ with $\Delta/\ell$ copies of itself.
	So~$H$ has maximum degree $\Delta$ and $k\Delta/\ell = (\ell -1 +1/\ell) \Delta$ edges, each of size $\ell$.
	Since $H$ is intersecting, it follows that $\chi'(H) = (\ell -1 +1/\ell)  \Delta$.
	
	We define $\rho(\ell)$ to be the smallest value of $\rho$ such that a bound of $\rho \Delta +o(\Delta)$ holds for this problem.
	This is formalised as follows.
	
	\begin{definition}\label{def:rho}
		For an integer $\ell\geq 2$, let
			\begin{align*}
				\rho(\ell) = \limsup_{\Delta \rightarrow \infty} \left\{ \frac{\chi'(H)}{\Delta (H)} \colon \text{$H$ is an $\ell$-uniform hypergraph with $\Delta(H) \ge \Delta$} \right\}.
			\end{align*}
	\end{definition}
	
	{\cref{con:alon-kim} and the construction following it} posit that $\rho(\ell)\leq \ell -1 +1/\ell$ and  equality holds if there is a projective plane of order $\ell-1$.  This suggests it might be true that $\rho(\ell) = \ell -1 + 1/\ell$ {whenever} there is a projective plane of order $\ell$, in which case determining~$\rho(\ell)$ for every integer~$\ell$ would be a notoriously hard problem.
	
	For general $\ell$, there is still no improvement over the trivial bound $\rho(\ell)\leq \ell$, which follows from the fact that every hyperedge intersects at most $\ell(\Delta-1)$ other hyperedges.
	The above construction, and the density of the values of~$\ell$ for which there is a projective plane of order~$\ell-1$, implies that $\rho(\ell)\geq \ell-o(\ell)$.  This establishes \eqref{eq:rho-bounds}.
	
	With this definition in place, \cref{thm:simultaneous-colourings-bounded} follows immediately from \cref{thm:stars-to-graphs} and \cref{obs:tars-multihypergraphs-index}:
	
	\begin{proof}[Proof of \cref{thm:simultaneous-colourings-bounded}]
		Let $G_1,\dots,G_k$ be a collection of graphs with maximum degree at most~$\Delta$ such that no edge lies in more than $\ell$ of them. Consider any vertex $v$ and let $H(v)$ denote the profile hypergraph of $R_1(v),\dots,R_k(v)$. So by~\cref{obs:tars-multihypergraphs-index}, we have $\chi'(R_1(v),\dots,R_k(v))=\chi'(H(v))$.
		
		As observed below \cref{def:profile}, $H(v)$ has maximum degree at most $\Delta$. Each edge $e\in H(v)$ corresponds to an edge $e'\in R_1(v)\cup\dots\cup R_k(v)$ and the size of $e$ is equal to the number of~$R_i(v)$ containing $e'$; thus $|e|\leq \ell$.  By adding extra degree-one vertices as needed, we can create an $\ell$-uniform hypergraph with the same maximum degree and chromatic index as $H$.
		
		Therefore, by the definition of $\rho(\ell)$, {it follows that} $\chi'(H(v))\leq \rho(\ell)\Delta + o(\Delta)$. \cref{thm:stars-to-graphs} implies that $\chi'(G_1,\dots,G_k)\leq \rho(\ell)\Delta + o(\Delta)$, thus completing the proof.
		
		The lower bound follows from~\cref{obs:tars-multihypergraphs-index} and \cref{def:rho}.
	\end{proof}

	\subsection{Bounded number of vertices}
	
	Next, we turn to the function $\nu(k)$ featured in \cref{thm:simultaneous-colourings}.
	A \emph{fractional matching} of a hypergraph~$H$ is a function $w\colon H \to [0,1]$ such that {$\sum_{e\colon v \in e} w(e) \leq 1$} for every vertex $v \in V(H)$.
	The \emph{size} of $w$ is $\sum_{e \in H} w(e)$, and we write $\nu^\ast(H)$ for the maximum size of a fractional matching in $H$.
	\begin{definition}\label{def:nu}
		The function $\nu(k)$ assumes the maximum of $\nu^\ast(H)$ over all intersecting hypergraphs $H$ on $k$ vertices.
	\end{definition}
	
	{Note that we can assume $H$ to be simple in this definition, as for each set of multiple edges, we can assign the sum of their weights to one of them.}

	The following two observations, proved in \cref{sec:relate-chi-nu}, tie $\nu(k)$ to the chromatic index of multihypergraphs on $k$ vertices.
	Note this is precisely the class of profile hypergraphs that arise in {connection to the simultaneous colouring problem and }\cref{thm:simultaneous-colourings}.
	
	\begin{lemma}\label{lem:index-nu-upper}
		Every multihypergraph $H$ on $k$ vertices has $\chi'(H) \leq \nu(k)\Delta(H)$.
	\end{lemma}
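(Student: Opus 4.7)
My plan is to translate the edge-colouring problem into a statement about fractional matchings in intersecting hypergraphs, where the definition of $\nu(k)$ applies directly.

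\emph{Setup and the intersecting case.} Write $\Delta = \Delta(H)$ and, for each $S \subseteq [k]$, let $m_S$ be the multiplicity of the edge type $S$ in $H$; the degree bound reads $\sum_{S \ni v} m_S \leq \Delta$ for every $v \in [k]$. Since two copies of the same edge type share all of their vertices, a proper edge-colouring of $H$ partitions its edges into matchings of the simple support hypergraph $H_0 = \{S \subseteq [k] : m_S > 0\}$. Suppose first that $H_0$ is intersecting. Then every two edges of $H$ share a vertex, so $\chi'(H) = |E(H)| = \sum_S m_S$. The function $w(S) := m_S/\Delta$ satisfies $\sum_{S \ni v} w(S) \leq 1$ and is therefore a fractional matching of $H_0$; the definition of $\nu(k)$ gives $\sum_S w(S) \leq \nu^{\ast}(H_0) \leq \nu(k)$, and rearranging yields $\chi'(H) \leq \nu(k)\Delta$.

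\emph{General case via LP duality.} For arbitrary $H$ I would work with the fractional chromatic index, which by LP duality equals
\[
\chi'^{\ast}(H) \;=\; \max_{x \geq 0} \sum_S m_S\, x_S \quad \text{subject to} \quad \sum_{S \in M} x_S \leq 1 \text{ for every matching } M \text{ of } H_0.
\]
In particular, the singleton matching constraints force $x_S \leq 1$. The key step is to argue that an optimal $x$ can be chosen with $\supp(x)$ intersecting on $[k]$: any two disjoint $S_1,S_2 \in \supp(x)$ would form a matching with $x_{S_1} + x_{S_2} \leq 1$, and an exchange argument should eliminate one of them without decreasing the objective. Applying the intersecting-case analysis to $\supp(x)$ then yields $\chi'^{\ast}(H) \leq \nu(k)\Delta$.

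\emph{Main obstacle.} The crux of the proof is closing the integrality gap between $\chi'^{\ast}(H)$ and $\chi'(H)$, which can be nonzero for general hypergraphs. For multihypergraphs on a bounded vertex set I expect the proof either to establish that the LP relaxation is tight (through a perfectness-type property of the intersection graph of $H_0$), or to bypass the LP altogether by an induction on $\Delta$: iteratively extract a matching that reduces the maximum degree by one and defer to the intersecting case as a base. Handling configurations where no single matching simultaneously covers all max-degree vertices (such as $K_5$-type structures) is likely to be the most delicate step and may require spreading a fractional matching across several integer colour classes.
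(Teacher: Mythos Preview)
Your treatment of the intersecting case is correct and matches the paper's argument verbatim: once every two edges meet, $\chi'(H)=|E(H)|$, and the normalised multiplicities $m_S/\Delta$ form a fractional matching of the simple support hypergraph, so $|E(H)|/\Delta\leq\nu(k)$.

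The gap is in the general case. You reach for the fractional chromatic index and then worry (rightly) about the integrality gap between $\chi'^{\ast}$ and $\chi'$; you also leave the ``exchange argument'' for making $\supp(x)$ intersecting as a hope rather than a proof. None of this machinery is needed, and the route you sketch does not obviously close. The paper's reduction to the intersecting case is a one-line combinatorial move: while $H$ has two disjoint edges $e$ and $f$, replace them by the single edge $e\cup f$. This does not raise any vertex degree, so $\Delta$ is preserved, and any proper edge-colouring of the new hypergraph yields one of $H$ by assigning the colour of $e\cup f$ to both $e$ and $f$. Iterating lands you in the intersecting case, where your own argument finishes the proof. There is no LP, no induction on $\Delta$, and no integrality issue to confront.
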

	
	\begin{lemma}\label{lem:index-nu-lower}
		For all positive integers $k$ and $\Delta$, there exists a hypergraph $H$ on $k$ vertices with maximum degree at most $\Delta$ such that $\chi'(H) \geq \nu(k)\Delta -k$.
	\end{lemma}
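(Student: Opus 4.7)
My plan is to construct $H$ by ``blowing up'' the edges of an extremal intersecting hypergraph on $[k]$ according to an optimal fractional matching. Since there are only finitely many hypergraphs on the vertex set $[k]$ and, for each of them, $\nu^{*}$ is the optimum of a (finite) linear program, the supremum in \cref{def:nu} is attained: fix an intersecting hypergraph $H_{0}$ on $[k]$ together with an optimal fractional matching $w^{*}\colon E(H_{0})\to[0,1]$ of size exactly $\nu(k)$.

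Next, define the multihypergraph $H$ on $[k]$ by taking, for each edge $e\in E(H_{0})$, exactly $m_{e}:=\lfloor w^{*}(e)\Delta\rfloor$ copies of $e$. For every vertex $v$, we then have
\[
\deg_{H}(v) \;=\; \sum_{e\ni v} m_{e} \;\le\; \Delta\sum_{e\ni v} w^{*}(e) \;\le\; \Delta,
\]
so $\Delta(H)\le\Delta$. Because $H_{0}$ is intersecting, any two (copies of) edges in $H$ share a vertex, and hence a proper edge-colouring of $H$ must assign a distinct colour to each edge. Therefore $\chi'(H)=|E(H)|=\sum_{e} m_{e}$.

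The only remaining point is to show $\sum_{e} m_{e}\ge \nu(k)\Delta-k$. Each rounding $w^{*}(e)\Delta\mapsto\lfloor w^{*}(e)\Delta\rfloor$ loses strictly less than $1$ from the objective, so the total loss is bounded by the size of the support of $w^{*}$. To ensure this support has at most $k$ edges, I would, without loss of generality, take $w^{*}$ to be a basic feasible solution of the fractional matching LP for $H_{0}$: the only non-trivial constraints are the $k$ inequalities $\sum_{e\ni v} w^{*}(e)\le 1$, so a standard LP fact gives that a basic feasible solution has at most $k$ non-zero coordinates. We then conclude
\[
\chi'(H) \;=\; \sum_{e} m_{e} \;\ge\; \sum_{e\in\mathrm{supp}(w^{*})}\bigl(w^{*}(e)\Delta-1\bigr) \;\ge\; \nu(k)\Delta-k.
\]
I don't anticipate a real obstacle; the only subtle step is the passage to a basic feasible solution, which is essential because $H_{0}$ could otherwise have $w^{*}$-support of size much larger than $k$.
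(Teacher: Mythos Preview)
Your proof is correct and follows essentially the same route as the paper: take an extremal intersecting hypergraph on $[k]$ with an optimal fractional matching, replace each edge $e$ by $\lfloor w(e)\Delta\rfloor$ copies, and bound the rounding loss by the support size. The only cosmetic difference is how the support bound $|\mathrm{supp}(w^{*})|\le k$ is justified: the paper invokes a lemma of F\"uredi (\cref{lem:linear-programming-fact}) giving a subhypergraph with at most $k$ edges and the same $\nu^{*}$, whereas you argue directly via basic feasible solutions of the fractional matching LP --- these are of course the same fact.
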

	
	These two lammata easily yield \cref{thm:simultaneous-colourings}:
	\begin{proof}[Proof of \cref{thm:simultaneous-colourings}]
		This follows straightforwardly from the preceding lammata, \cref{obs:tars-multihypergraphs-index}\ and \cref{thm:stars-to-graphs}, in the same manner as the proof of \cref{thm:simultaneous-colourings-bounded}.  We omit the repetitive details.
	\end{proof}
	
	The function $\nu(k)$ is related to classic work of Lov\'asz~\cite{lovasz2006minimax} and Füredi~\cite{furedi1981maximum}.
	It was first studied by Mills~\cite{mills1979covering} and, independently, {by} Horák and Sauer~\cite{horak1992covering}.
	Since then, it has found many applications in a variety of areas including discrepancy of spanning trees~\cite{gishboliner2022discrepancies}, coverings of edges with cliques~\cite{horak1992covering,mills1979covering}, monochromatic coverings \cite{alon2023power,erdos1990monochromatic} and Ramsey problems~\cite{debiasio2021ramsey}.
	The survey of Füredi~\cite[Chapter 7]{furedi1988matchings} provides further background.
	
	The following result recovers the bounds on $\nu(k)$ given in \eqref{eq:nu-bounds}. It can be viewed as a proof of the analogue to \cref{con:alon-kim}. Those bounds are already known (see for instance~\cite{gishboliner2022discrepancies}), but for the sake of completeness we provide a proof in \cref{sec:bounds-nu}.
	\begin{proposition}\label{prop:upper-bd-nu}
		For all integers $r\geq 1$ and $k\geq r(r+1)$, we have $\nu(k) \leq k/(r+1)$. In particular, $\nu(k)\leq \sqrt{k}$ for all $k\geq 1$.\\ Moreover, the former bound is tight when either
		\begin{enumerate}[label={\rm(\alph*)}]
			\item\label{nu:projective} $k=q^2+q+1$ and there is a projective plane of order $q$; or
			\item\label{nu:affine} $k = q^2+q$ and there is an affine plane of order $q$.
		\end{enumerate}
	\end{proposition}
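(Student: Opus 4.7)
The plan is to establish the upper bound by a short double-counting argument that splits on the minimum edge size of $H$, and then to verify the tight cases via classical incidence structures.

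For the upper bound, fix an intersecting hypergraph $H$ on vertex set $[k]$ together with a fractional matching $w$ of size $\nu^\ast(H)$. I would split into two cases. If every edge of $H$ has at least $r+1$ vertices, then double-counting gives
\[
(r+1)\sum_{e\in H} w(e) \;\leq\; \sum_{e\in H} |e|\, w(e) \;=\; \sum_{v\in [k]}\sum_{e\ni v} w(e) \;\leq\; k,
\]
so $\nu^\ast(H)\leq k/(r+1)$. Otherwise, there is an edge $e_0$ with $|e_0|\leq r$; since $H$ is intersecting, $|e\cap e_0|\geq 1$ for every $e\in H$, and hence
\[
\sum_{e\in H} w(e) \;\leq\; \sum_{e\in H} w(e)\,|e\cap e_0| \;=\; \sum_{v\in e_0}\sum_{e\ni v} w(e) \;\leq\; |e_0| \;\leq\; r.
\]
The hypothesis $k\geq r(r+1)$ translates to $r\leq k/(r+1)$, so $\nu^\ast(H)\leq k/(r+1)$ in both cases. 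For the bound $\nu(k)\leq\sqrt{k}$, I would choose $r$ so that both $r(r+1)\leq k$ and $k/(r+1)\leq\sqrt{k}$ hold; for $k\geq 2$ this is achieved by $r=\lceil\sqrt{k}\rceil-1$ after a brief split on whether $k$ is a perfect square, and the case $k=1$ is trivial.

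For tightness of~\ref{nu:projective}, I would take $H$ to be the incidence hypergraph of the lines of a projective plane of order $q$: this is intersecting on $k=q^2+q+1$ vertices with all edges of size $q+1$, and the uniform assignment $w(e)=1/(q+1)$ is a valid fractional matching of size $(q^2+q+1)/(q+1) = k/(r+1)$ with $r=q$. For~\ref{nu:affine}, I would use the \emph{dual} of an affine plane of order $q$: the $q^2+q$ vertices are the lines, the $q^2$ edges are the points (each point $p$ becoming the set of lines through $p$), every edge has size $q+1$ and every vertex lies in $q$ edges, and the hypergraph is intersecting because any two points span a common line. The uniform weighting $w(e)=1/q$ then gives a fractional matching of size $q = q(q+1)/(q+1) = k/(r+1)$ with $r=q$.

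The main obstacle is largely bookkeeping; the one non-obvious point is recognising that for~\ref{nu:affine} one must pass to the \emph{dual} of the affine plane, since the affine plane itself contains parallel classes and is therefore not intersecting.
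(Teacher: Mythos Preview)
Your proof is correct and essentially identical to the paper's: your double-counting in the all-edges-large case is the LP-duality argument with the uniform fractional cover $t\equiv 1/(r+1)$ written out explicitly, your small-edge case is exactly the content of the paper's auxiliary Lemma~\ref{lem:small-edge}, and your ``dual of the affine plane'' is precisely the line graph $L(A_q)$ the paper uses for part~\ref{nu:affine}.

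One minor slip worth fixing: the choice $r=\lceil\sqrt{k}\rceil-1$ need not satisfy $r(r+1)\leq k$ (for $k=5$ one gets $r=2$ and $r(r+1)=6$), so $\nu(k)\leq\sqrt{k}$ does not follow purely from the stated inequality $\nu(k)\leq k/(r+1)$ for $k\geq r(r+1)$. It does, however, follow immediately from your two-case argument \emph{before} you combine the cases into that single inequality: with threshold $s=\lceil\sqrt{k}\rceil$, either some edge has size at most $s-1<\sqrt{k}$ and the small-edge bound applies, or all edges have size at least $s\geq\sqrt{k}$ and the double count gives $\nu^\ast(H)\leq k/s\leq\sqrt{k}$. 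The paper's own proof does not spell this step out either.
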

	
	This proposition suggests that, as with $\rho(\ell)$, determining $\nu(k)$ for every value of $k$ could very well require us to determine the values of $q$ for which there exist projective and affine planes of order $q$.

	\subsection{Chromatic index and fractional matchings}\label{sec:relate-chi-nu}
	
	It remains to show \cref{lem:index-nu-upper,lem:index-nu-lower}.
	
	\begin{proof}[Proof of \cref{lem:index-nu-upper}]
		We can assume that $H$ is intersecting.
		Indeed, as long as there are disjoint edges $e$ and $f$, we can replace them with the single edge~$e \cup f$ to obtain a new graph~$H'$ with the same maximum degree.
		{Then, given any proper edge-colouring of~$H'$, we obtain a proper edge-colouring of~$H$ by assigning the colour of $e\cup f$ to both $e$ and $f$.}
		
		Suppose therefore that $H$ is intersecting and thus $\chi'(H)=\size{E(H)}$.
		Let~$J$ be the (simple) hypergraph on $V(H)$ that contains the edges of positive multiplicity in $H$.
		Define a function  $w\colon J \to [0,1]$ by setting $w(e) = m(e)/\Delta(H)$, where $m(e)$ is the multiplicity of $e$ in $H$.
		It is easy to see that~$w$ is a fractional matching in $J$ {and has} size  $\size{E(H)}/\Delta(H)  \leq \nu^\ast(J) \leq \nu(k)$.
		This then implies that  $\chi'(H)\leq \nu(k)\Delta(H)$, as claimed.
	\end{proof}
	
	For the proof of \cref{lem:index-nu-lower}, we use the following auxiliary result.
	\begin{lemma}[{\cite{furedi1981maximum}}]\label{lem:linear-programming-fact}
		For every hypergraph $H$, there exists a subhypergraph $H' \subset H$ such that $\nu^\ast(H) = \nu^\ast(H')$ and  $\size{E(H')}\leq \size{V(H')}$.
	\end{lemma}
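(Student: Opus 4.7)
The plan is to realise $\nu^\ast(H)$ as the value of the linear program
\[
\text{maximise}\ \sum_{e\in E(H)} w(e) \qquad \text{subject to} \qquad w\geq 0,\quad \sum_{e\ni v} w(e)\leq 1\ \text{for every}\ v\in V(H),
\]
and then exploit the structure of its basic feasible solutions. Since the feasible polytope is bounded, fix an optimal basic feasible solution $w^\ast$, and let $H'$ be the subhypergraph with $E(H')=\{e\in E(H)\colon w^\ast(e)>0\}$ and $V(H')=\bigcup_{e\in E(H')} e$. The restriction of $w^\ast$ to $E(H')$ is a feasible fractional matching of $H'$ of the same total weight, so $\nu^\ast(H')=\nu^\ast(H)$.

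It remains to verify $|E(H')|\leq |V(H')|$. Introducing a slack variable $s_v\geq 0$ for each vertex rewrites the LP as a system of $|E(H)|+|V(H)|$ non-negative variables tied by $|V(H)|$ equalities. At a basic feasible solution at most $|V(H)|$ of these variables are positive, so the number of positive $w$-variables, which is exactly $|E(H')|$, plus the number of positive slacks is at most $|V(H)|$. Equivalently, $|E(H')|$ is at most the number of tight vertex constraints $\sum_{e\ni v} w^\ast(e)=1$.

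The key observation closes the argument: any such tight $v$ has some edge $e\ni v$ with $w^\ast(e)>0$, so $v\in V(H')$. Hence every tight vertex lies in $V(H')$, giving $|E(H')|\leq |V(H')|$. The only delicate point is invoking the basic feasible solution bound correctly; alternatively, one may argue directly by choosing $w^\ast$ with smallest possible support and observing that $|E(H')|>|V(H')|$ would force the columns of the $V(H')\times E(H')$ incidence matrix to be linearly dependent, so that shifting weights along a kernel vector either improves the objective or zeroes out some $w^\ast(e)$, contradicting the optimality or minimality of $w^\ast$.
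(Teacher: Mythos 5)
Your proof is correct. Note that the paper does not prove this lemma at all --- it is quoted directly from F\"uredi's 1981 paper --- so there is no in-paper argument to compare against; your LP argument is essentially the standard one for this classical fact. Both of your routes are sound: the basic-feasible-solution count gives $|E(H')|$ plus the number of positive slacks at most $|V(H)|$, hence $|E(H')|$ at most the number of tight vertices, and your observation that every tight vertex is covered by a positively weighted edge (its degree sum equals $1>0$) correctly places all tight vertices inside $V(H')=\bigcup_{e\in E(H')}e$. The alternative minimal-support argument at the end is also complete: a kernel vector of the $V(H')\times E(H')$ incidence matrix leaves all vertex constraints unchanged (vertices outside $V(H')$ meet no edge of $E(H')$), so perturbing along it either improves the objective or shrinks the support. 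The only hypotheses you are implicitly using are that edges are non-empty (so the LP is bounded --- automatic here, since the paper's definition already restricts $w$ to $[0,1]$) and that an optimum is attained at a vertex of the polytope, both of which are unproblematic.
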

	
	\begin{proof}[Proof of \cref{lem:index-nu-lower}]
		Let $J$ be an intersecting hypergraph on vertex set $[k]$ with a fractional matching $w \colon J \to [0,1]$ of size $\nu(k)$. By \cref{lem:linear-programming-fact}, we can assume that $\size{E(J)} \leq k$.	 We construct the multihypergraph $H$ as follows. For every edge $e\in J$ with {$w(e)>0$}, add $\floor{w(e)\Delta}$ copies of~$e$ as edges in~$H$. Since~$w$ is a fractional matching, the degree of every vertex in $H$ is at most $\Delta$. Since $J$ is intersecting, so is $H$, and therefore $$\chi'(H) = \size{E(H)} = \sum_{e\in J}\floor*{w(e)\Delta}\geq \sum_{e\in J}w(e)\Delta - \size{E(J)}.$$ Combining this with the fact that $J$ was taken to have at most $k$ edges completes the argument.
	\end{proof}

	\subsection{Bounds for fractional matchings}\label{sec:bounds-nu}
	
	For the proof of the upper bound in \cref{prop:upper-bd-nu}, we use an auxiliary lemma.
	
	\begin{lemma}\label{lem:small-edge}
		Let $H$ be an intersecting hypergraph on $k$ vertices with an edge of size at most~$\ell$.
		Then $\nu^\ast(H) \leq \ell$.
	\end{lemma}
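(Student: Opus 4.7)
The plan is to exploit the fact that every edge of $H$, by the intersecting property, must meet the small edge $e_0$ of size at most $\ell$, and then double count using the fractional matching constraints at the vertices of $e_0$.

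More concretely, let $e_0 \in H$ be an edge with $|e_0| \leq \ell$, and let $w \colon H \to [0,1]$ be an arbitrary fractional matching of $H$. Since $H$ is intersecting, every edge $f \in H$ satisfies $f \cap e_0 \neq \emptyset$, so $\sum_{v \in e_0} \mathbbm{1}[v \in f] \geq 1$. Multiplying by $w(f)$ and summing over $f \in H$ gives
\[
    \sum_{f \in H} w(f) \;\leq\; \sum_{f \in H} w(f) \sum_{v \in e_0} \mathbbm{1}[v \in f] \;=\; \sum_{v \in e_0} \sum_{f \ni v} w(f) \;\leq\; \sum_{v \in e_0} 1 \;=\; |e_0| \;\leq\; \ell,
\]
where the penultimate inequality is the defining property of a fractional matching. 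Taking the supremum over $w$ yields $\nu^\ast(H) \leq \ell$.

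There is no real obstacle here: the argument is a single swap of summation combined with the intersecting hypothesis, so I expect the proof to be essentially the display above with one line of setup and one line of conclusion.
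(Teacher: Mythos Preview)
Your proof is correct and essentially identical to the paper's own argument: both fix the small edge, use the intersecting property to bound the total weight by the sum over vertices of that edge, and then apply the fractional matching constraint at each such vertex. The only cosmetic difference is that you make the swap of summation explicit via an indicator function, whereas the paper writes the inequality directly.
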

	\begin{proof}
		Suppose that $f \in H$ is an edge of size at most $\ell$.
		Consider a fractional matching $w\colon E(H) \to [0,1]$ of size $\nu^\ast(H)$.
		It follows that $$\nu^\ast(H) = \sum_{e \in H} w(f) \leq \sum_{v \in f} \sum_{e: v \in e} w (e) \leq  \sum_{v \in f} 1   = |f| \leq \ell,$$
		where the first inequality follows because $H$ is intersecting and the second inequality because $w$ is a fractional matching.
	\end{proof}
	
	To finish the proof of \cref{prop:upper-bd-nu}, we {consider} the dual to the fractional matching problem (in the linear programming sense).
	For  a hypergraph $H$, a function $t \colon V(H) \to [0,1]$ is a \emph{fractional (vertex) cover} if $\sum_{v \in e} t(v) \geq 1$ for every edge $e \in H$. The \emph{size} of $t$ is $\sum_{v \in V(H)} t(v)$.
	We denote the size of a smallest fractional cover of $H$ by $\tau^\ast(H)$, and by duality we always have $\tau^\ast(H) = \nu^\ast(H)$.
	The \emph{line graph} $L(H)$ of $H$ is a hypergraph  on vertex set~$E(H)$ with edges $\set{e \in E(H)\colon v\in e}$ for all $v\in V(H)$.
	\begin{proof}[Proof of \cref{prop:upper-bd-nu}]
		Consider an intersecting hypergraph $H$ of order~$k$.
		If there is an edge of size at most $r$, then $\nu(k) \leq r \leq k/(r+1)$ by \cref{lem:small-edge}.
		So assume that all edges have size at least $r+1$.
		We define $t \colon V(H) \to [0,1]$ by giving each vertex weight $1/(r+1)$.
		Hence, $t$ is a fractional vertex cover of~$H$ of size $k/(r+1)$.
		So $\nu^\ast(H) = \tau^\ast(H) \leq k/(r+1)$ by linear programming duality.
		
		The tightness comes from a construction based on affine and projective planes.
		
		{\em Part~\ref{nu:projective}:} Suppose $k=q^2+q+1$ and there is a projective plane $P_q$ of order $q$. So $P_q$  is an intersecting $(q+1)$-regular $(q+1)$-uniform hypergraph on $q^2+q+1$ vertices. Assigning weight $ 1/(q+1)$ to every edge of $P_q$ gives a fractional matching of size $k/(q+1)$.
		
		{\em Part~\ref{nu:affine}:} Suppose	 $k=q^2+q$ and
		there is an affine plane $A_q$ of order $q$. So~$A_q$  is a $(q+1)$-regular $q$-uniform hypergraph on $q^2$ vertices with $q^2+q$ edges in which every pair of vertices shares an edge.
		Note that its line graph $L(A_q)$ is an intersecting $q$-regular $(q+1)$-uniform hypergraph on $q^2+q$ vertices with $q^2$ edges. Setting $w(e) = 1/q$ for all $e\in L(A_q)$ then gives a fractional matching of $L(A_q)$ of size $q^2/q = q = k/(q+1)$.
	\end{proof}

	\section{From stars to general graphs}\label{sec:main_proof}
	
	In this section, we prove \cref{thm:stars-to-graphs}, which reduces the problem of simultaneous edge-colourings to the case of stars.
	In fact, we shall show the following stronger version of this result, which extends the phenomenon to list edge-colourings.
	
	\begin{theorem}\label{thm:stars-to-graphs-list}
		For every $\eps>0$ and $k\geq1$, there exists $\Delta_0 = \Delta_0(\eps,k)$ such that the following holds for every  $\Delta \geq \Delta_0$ and $\gamma \geq 0$.
		Let $G_1,\dots, G_k$ be graphs of maximum degree at most $\Delta$.
		Suppose that for all $v\in V(G)$, the stars $R_1(v),\dots, R_k(v)$ satisfy $\chi'(R_1(v),\dots, R_k(v))\leq\gamma\Delta$.
		Then, for every choice of lists~$L(e)$ with $\size{L(e)}\geq (1+2^{k+4}\eps)\gamma\Delta$ for all $e\in G$, the graphs $G_1,\dots, G_k$ have a simultaneously proper $L$-edge-colouring.
	\end{theorem}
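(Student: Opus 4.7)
The plan is to reformulate the simultaneous list edge-colouring problem as a list vertex-colouring problem on an auxiliary \emph{conflict graph} $\Gamma$, and then apply a semi-random (Rödl nibble) argument exploiting the local structure. Let $F = G_1 \cup \dots \cup G_k$ and let $\Gamma$ be the graph on vertex set $E(F)$ in which two edges $e, f$ are joined whenever there exist a vertex $v$ and an index $i \in [k]$ with $e, f \in G_i$ and $v \in e \cap f$. A proper $L$-vertex-colouring of $\Gamma$ is exactly a simultaneously proper $L$-edge-colouring of $G_1, \dots, G_k$. The key structural fact is that the $\Gamma$-neighbourhood of an edge $e$ splits into two pieces, one per endpoint $v$, and on each side the subgraph of $\Gamma$ induced by the edges of $F$ incident to $v$ is the line graph of the profile hypergraph $H(v) = \PF(R_1(v),\dots,R_k(v))$. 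By hypothesis and \cref{obs:tars-multihypergraphs-index}, $\chi'(H(v)) \le \gamma\Delta$ for every $v$.

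Next I would run a Rödl-style nibble in $T = O(\log(1/\varepsilon))$ rounds, in which each uncoloured edge independently picks a random candidate colour from its current list and keeps it provided no conflicting edge in $\Gamma$ picked the same colour. To make the local analysis tractable, edges are processed in batches indexed by their profile $S \subseteq [k]$; since there are at most $2^k - 1$ non-empty profile classes, this accounts for the factor $2^{k+4}\varepsilon$ in the list-size slack. Using Talagrand's inequality for concentration together with the Lovász local lemma to stitch local events into a global assignment, one shows that with positive probability after each round the residual maximum degree at every $v$ shrinks by roughly a factor $1-\eta$ while each residual list shrinks by a factor at most $1 - \Theta(\eta/\gamma)$; so the ratio between list size and local chromatic budget is preserved up to a multiplicative $(1+O(\eta))$ per round.

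The main obstacle is controlling this invariant for the residual local hypergraph across all rounds: after deleting coloured edges from each $H(v)$, the chromatic index of the residual $H'(v)$ must remain within the shrunken list budget. The cleanest route is via the fractional relaxation $\nu^\ast$ discussed in \cref{sec:stars}, which behaves continuously under weighted edge deletions and yields the required upper bound on $\chi'$ in the post-nibble regime (the semi-random process effectively assigns fractional survival weights to surviving edges, and standard fractional-to-integral arguments keep the chromatic index within a $(1+o(1))$ factor of the fractional budget). With this invariant maintained, after $T$ nibble rounds the uncoloured edges form a sparse residual subgraph whose list sizes still exceed the local chromatic budget by a factor $1+\Omega(\varepsilon)$, and a final mop-up pass using the Lovász local lemma completes the colouring. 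Accumulating the $(1+O(\eta))^T$ multiplicative slack over the $T$ rounds and across the $2^k$ profile classes yields the $2^{k+4}\varepsilon$ factor in the statement.
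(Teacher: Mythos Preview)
Your conflict-graph reformulation and the observation that the $\Gamma$-neighbourhood at each endpoint $v$ is the line graph of $H(v)$ are correct and match the paper. The gap is in the nibble analysis. Your claimed rates, that residual degree shrinks like $1-\eta$ while lists shrink only like $1-\Theta(\eta/\gamma)$, are not justified and in fact go the wrong way. An edge $e=uv$ with profile $S$ has, at $v$, up to $\sum_{i\in S}\deg_{G_i}(v)\le |S|\Delta$ many $\Gamma$-neighbours, so in a round with activation probability $p$ the expected list loss is of order $p\,|S|\Delta/(\gamma\Delta)=p|S|/\gamma$ per endpoint, which for $|S|>\gamma/2$ exceeds the $p$-rate at which degrees shrink. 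More fundamentally, the invariant you want to maintain is a \emph{list} condition, not a bound on $\chi'(H'(v))$ or $\nu^\ast(H'(v))$: after one round the surviving lists $L'(e)$ differ from edge to edge, and the appeal to ``$\nu^\ast$ behaving continuously under weighted deletions'' says nothing about whether each individual $L'(e)$ remains large enough relative to its own conflicts. Processing profiles in batches does not fix this, since when you reach profile $S$ the colours blocked at $v$ by earlier batches are exactly the colours used on $\bigcup_{i\in S}G_i$-edges at $v$, and you have no a priori control on how many distinct colours that is.

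The paper avoids the whole iterated-invariant issue with a different, one-shot idea. At each vertex $v$ it fixes the given proper colouring $\psi_v$ of $H(v)$ with $\gamma\Delta$ colours, then draws an independent uniform map $\sigma_v\colon L(v)\to[\gamma\Delta]$ and declares $c$ usable for profile $S$ at $v$ iff $\sigma_v(c)=\psi_v(f)$ for some $f$ at $v$ with profile $S$. Because $\psi_v$ is proper, intersecting profiles get \emph{disjoint} sub-lists at $v$ deterministically; a single concentration-plus-LLL argument then shows each $e\in G_S$ retains a sub-list of size about $\deg_S(u)\deg_S(v)/(\gamma\Delta)$ with the right local weight bounds. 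The graphs $G_S$ are now completely decoupled, and each is finished off by the weighted-list result of Bonamy--Delcourt--Lang--Postle (\cref{thm:BDLP}) rather than a hand-rolled nibble. So the missing idea in your sketch is precisely this use of $\psi_v$ to pre-partition colours and reduce to independent ordinary list-edge-colouring problems.
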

	
	{Before giving the proof of \cref{thm:stars-to-graphs-list}, we quickly derive \cref{thm:LLC-simultaneous} from it.}
	\begin{proof}[Proof of \cref{thm:LLC-simultaneous}]
		Let $\eps>0$, $k\geq 1$ and $\Delta\geq \Delta_0$, where $\Delta_0$ is the constant from \cref{thm:stars-to-graphs-list}.
		Let $G_1,\dots, G_k$ be graphs of maximum degree at most~$\Delta$.
		Consider a vertex~$v$  that maximises $\chi'(R_1(v),\dots, R_k(v))$, {and write $\gamma=\chi'(R_1(v), \dots, R_k(v)) / \Delta$.}
		Then  \cref{thm:stars-to-graphs-list} yields
		\begin{align*}
			\gamma\Delta \leq \chi'(G_1,\dots, G_k) \leq \chi'_{\lst}(G_1,\dots,G_k) \leq (\gamma+2^{k+4}\eps)\Delta,
		\end{align*}
		as needed.
	\end{proof}
	
	The rest of this section is devoted to the proof of \cref{thm:stars-to-graphs-list}.
	We begin with an outline of our strategy. Consider $\gamma\geq 0$ as in \cref{thm:stars-to-graphs-list}.  Write $G=G_1\cup\dots\cup G_k$, and let $L(e)$ be a list of size at least $(1+2^{k+4}\eps)\gamma\Delta$ for all~$e\in G$.
	We partition the edges of $G$ into graphs $\{G_S\}_{S \subset [k]}$, where $G_S$  contains the edges with profile~$S$ (cf.\ \cref{def:profile}).
	Our strategy is to colour each $G_S$ independently using a result for local edge-colouring due to Bonamy, Delcourt, Lang and Postle~\cite{bonamy2024edge}. To facilitate this, we sparsify the list of each edge to guarantee that we cannot create a conflict between edges in different~$G_S$, while also maintaining that a proper $L$-edge-colouring exists for each~$G_S$. More precisely, we ensure that, if two {incident} edges appear in the same~$G_i$ but have different profiles, that is, are contained in different $G_S$, then they receive disjoint lists.
	
	We begin with our list sparsification lemma.
	For technical reasons, we write $\delta^+(F)$ to denote the minimum \emph{positive} degree of a graph $F$, that is, the minimum degree of the graph obtained from $F$ after removing any isolated vertices.
	If it is clear from the context, we abbreviate  $\deg_S(v) = \deg_{G_S}(v)$.
	
	\begin{lemma}[List sparsification]\label{lem:list-partitition}
		For every $\eps>0$ and $k\geq1$, there exists $\Delta_0 = \Delta_0(\eps,k)$ such that the following holds for all $\Delta\geq \Delta_0$.
		Let $G_1, \dots, G_k$ be graphs on vertex set $V$ of maximum degree at most $\Delta$, and  $G  = G_1\cup\dots\cup G_k$.
		Suppose that, for each non-empty $S\subseteq [k]$, we have $\delta^+(G_S)\geq \eps\Delta$, and that, for some $1\leq \gamma\leq k$, the stars centred at each~$v\in V$ satisfy  $\chi'(R_1(v),\dots, R_k(v))\leq\gamma\Delta$.
		For each edge $e\in G$, let $L(e)$ be a list of colours satisfying $|L(e)|\geq (1+5\eps)\gamma\Delta$.
		Then there exist subsets $L'(e)\subseteq L(e)$ for each $e\in G$  such that the following holds for every $i \in [k]$ and $S \subset [k]$:
		\begin{enumerate}[label={\rm(\roman*)}]
			\item $L'(xy) \cap L'(xz) = \es$ for all edges $xy,xz\in G_i$ with distinct profiles;
			\label{itm:partition-disjointness}
			\item for every $uv \in G_S$, we have  $$|L'(uv)| \ge (1 + \eps) \frac{\deg_{S}(u)\deg_{S}(v)}{\gamma \Delta };$$
			\label{itm:partition-sizes}
			\item for every vertex $v \in V$  and colour $c \in \bigcup_{uv\in G_S}L'(uv)$, we have
			\label{itm:partition-conflicts}
			\begin{align*}
				\sum_{uv \in G_S \colon c \in L'(uv)} \frac{ \gamma \Delta}{\deg_{S}(u)\deg_{S}(v)} \le 1.
			\end{align*}
		\end{enumerate}
	\end{lemma}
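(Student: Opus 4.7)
The plan is to construct the $L'(uv)$ by a random sparsification in which a per-vertex random ``profile tag'' enforces condition~(i) structurally, while independent per-edge Bernoulli thinning calibrates the sizes in~(ii) and the weighted sum in~(iii). Verification then proceeds through Chernoff-type concentration together with the Lov\'asz Local Lemma.

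Concretely, I would fix for every vertex $v$ (using the hypothesis $\chi'(R_1(v),\dots,R_k(v)) \leq \gamma\Delta$) a simultaneously proper edge-colouring $\phi_v$ of the stars at $v$ with colours in $[\gamma\Delta]$. For every $v$, every $i \in [k]$ and every colour $c$, independently sample a random profile $\sigma_i^v(c) \in \{S \subset [k] : i \in S,\, \deg_S(v) > 0\} \cup \{\bot\}$ from a distribution to be specified, and, independently, sample Bernoulli variables $\eta_{uv}(c) \in \{0,1\}$ for every edge $uv$ and every $c \in L(uv)$. For $uv \in G_S$, declare $c \in L'(uv)$ precisely when $\sigma_i^u(c) = \sigma_i^v(c) = S$ for every $i \in S$ and $\eta_{uv}(c) = 1$. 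Condition~(i) is then built into the construction: for $xy,xz \in G_i$ with distinct profiles $S \neq S'$, a colour surviving in both $L'(xy)$ and $L'(xz)$ would force $\sigma_i^x(c) = S$ and $\sigma_i^x(c) = S'$ simultaneously, a contradiction. The parameters are tuned so that $\mathbb{P}[c \in L'(uv)]$ is close to $\deg_S(u)\deg_S(v)/(\gamma\Delta)^2$ for every $uv \in G_S$, which combined with $|L(uv)| \geq (1+5\eps)\gamma\Delta$ gives $\mathbb{E}[|L'(uv)|] \approx (1+5\eps)\deg_S(u)\deg_S(v)/(\gamma\Delta)$ and
\begin{align*}
  \mathbb{E}\Biggl[\sum_{\substack{uv \in G_S \\ c \in L'(uv)}} \frac{\gamma\Delta}{\deg_S(u)\deg_S(v)}\Biggr] \leq \frac{\deg_S(v)}{\gamma\Delta} \leq \frac{1}{\gamma} \leq 1.
\end{align*}
Chernoff-type bounds then give concentration for each individual bad event (``$|L'(uv)|$ is too small'' or ``the above weighted sum exceeds~$1$''), and the Lov\'asz Local Lemma, which applies because each random choice only influences colours on edges at bounded graph-distance, delivers an outcome where all such bad events are avoided simultaneously.

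The main obstacle is designing the first-stage distributions for the $\sigma_i^v(c)$ so as to hit the intended marginals. One must simultaneously arrange $\prod_{i \in S}\mathbb{P}[\sigma_i^v(c) = S]$ to be roughly $\deg_S(v)/(\gamma\Delta)$ (for the correct probability targets in~(ii) and~(iii)) and the feasibility constraint $\sum_{S \ni i}\mathbb{P}[\sigma_i^v(c) = S] \leq 1$ for every $(v,i)$. Here the assumption $\delta^+(G_S) \geq \eps\Delta$ is essential: it forces every non-trivial profile $S$ at $v$ to satisfy $\deg_S(v) \geq \eps\Delta$, which in turn bounds the number of non-trivial profiles at $v$ by $k/\eps$ and allows the distributions to be normalised at a cost of a multiplicative constant (in $k$ and $\eps$). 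This constant loss is then absorbed by the $4\eps$-slack between the hypothesised list size $(1+5\eps)\gamma\Delta$ and the target $(1+\eps)\deg_S(u)\deg_S(v)/(\gamma\Delta)$ in~(ii).
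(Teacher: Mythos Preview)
Your overall architecture---random per-vertex tags enforcing (i), then Hoeffding plus the Local Lemma for (ii) and (iii)---is exactly the paper's. The gap is the ``main obstacle'' you flag: with \emph{independent} samples $\sigma_i^v(c)$ across $i\in[k]$, the target system $\prod_{i\in S}\Pr[\sigma_i^v(c)=S]\geq\deg_S(v)/m$ subject to $\sum_{S\ni i}\Pr[\sigma_i^v(c)=S]\le 1$ is in general \emph{infeasible}, not merely awkward to normalise. Concretely, take $k=3$ and a vertex $v$ whose incident edges have profiles $\{1,2\},\{1,3\},\{2,3\}$, each of degree $\Delta/2$; here $\gamma=3/2$ (the Shannon triangle), so $q:=\deg_S(v)/m=1/(3(1+\eps))$. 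Multiplying the three constraints $\prod_{i\in S}p_{i,S}\ge q$ and regrouping by index gives $q^3\le\prod_i\bigl(\prod_{S\ni i}p_{i,S}\bigr)\le\prod_i(1/4)=1/64$ by AM--GM and the sum constraints, forcing $q\le 1/4$; this fails once $\eps<1/3$. So no ``constant loss absorbed by the $4\eps$-slack'' can rescue the scheme---the shortfall is a fixed factor, not $1+O(\eps)$.

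The paper closes this gap by actually \emph{using} the simultaneous star-colouring $\phi_v$ that you fix but then never deploy. Sample a single uniform $\sigma_v(c)\in[m]$ with $m=(1+\eps)\gamma\Delta$, and declare $c\in L_S(v)$ iff $\sigma_v(c)=\phi_v(e)$ for some edge $e\in G_S$ at $v$; then set $L'(uv)=L(uv)\cap L_S(u)\cap L_S(v)$. Because $\phi_v$ is simultaneously proper, any two edges at $v$ whose profiles intersect receive distinct $\phi_v$-values, so $L_S(v)\cap L_{S'}(v)=\emptyset$ whenever $S\cap S'\ne\emptyset$, giving (i) for free; and edges of the \emph{same} profile at $v$ also receive distinct $\phi_v$-values, so $\Pr[c\in L_S(v)]=\deg_S(v)/m$ exactly, with no design problem and no need for the per-edge thinning $\eta_{uv}$. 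From that point your Hoeffding/LLL outline goes through essentially unchanged.
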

	
	We defer the proof of \cref{lem:list-partitition} to \cref{sec;list-partition} and continue with the proof of \cref{thm:stars-to-graphs-list}.
	Our argument relies on a result due to Bonamy, Delcourt, Lang and Postle~\cite{bonamy2024edge}, which itself is based on the breakthrough work of Kahn~\cite{kahn1996asymptotically}.
	To state the theorem, we introduce a weighted generalisation of list colouring.
	An assignment $(L,\mu)$ of \emph{weighted lists} to a graph $G$ is a collection of lists $L(e)$ and weight functions $\mu(e)\colon L(e)\to (0,1]$ for each~$e\in G$. For simplicity, we write $\mu(e,c) = \mu(e)(c)$.
	
	\begin{theorem}[{\cite[Theorem~6]{bonamy2024edge}}] \label{thm:BDLP}
		For every $\eps>0$, there exists $\delta_0 \ge 1$ such that the following holds for all $\delta \ge \delta_0$.
		Let $G$ be a graph with weighted lists of colours $(L, \mu)$ such that
		\begin{enumerate}[\upshape (a)]
			\item $\mu(e) \colon L(e) \rightarrow [ \delta^{-2}, \delta^{-1} ]$ for every edge $e \in G$;\label{BDLP:mu range}
			\item $\sum_{c \in L(e)} \mu (e,c) \ge 1 + \eps$  for every edge $e \in G$; \label{BDLP:list size}
			\item $\sum_{v\in e} \mu(e, c) \le 1$ for every vertex $ v \in V$ and colour~$c\in \bigcup_{v\in e}L(e)$.\label{BDLP:conflicts}
		\end{enumerate}
		Then there is an $L$-edge-colouring of~$G$.
	\end{theorem}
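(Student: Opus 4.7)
The plan is to prove \cref{thm:BDLP} via the semi-random (nibble) method, adapting Kahn's asymptotic solution of the list edge-colouring problem to the weighted-list setting.

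Fix a small parameter $p > 0$. For each edge $e$ and each colour $c \in L(e)$, independently \emph{activate} the pair $(e,c)$ with probability $p \cdot \mu(e,c)$. An activated pair $(e,c)$ is \emph{retained}, and $e$ is permanently assigned the colour $c$, if no other pair $(e',c)$ with $e \cap e' \neq \emptyset$ is activated. By condition (c), the expected total weight of activated pairs $(e',c)$ incident to any endpoint of $e$ is at most $p$, so the retention probability is approximately $\beta = e^{-2p}$, uniformly across all pairs. Condition (a) guarantees that each activation probability lies in $[p\delta^{-2}, p\delta^{-1}]$, so no single pair can dominate the analysis.

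Next I would update the weighted lists on the uncoloured edges. Delete from each surviving edge $e$ any colour $c$ that was assigned to an incident edge, and for each remaining colour set $\mu'(e,c) = \mu(e,c)/\beta$ to compensate for the expected retention factor. A direct computation shows that in expectation $\sum_{c \in L'(e)} \mu'(e,c) \ge 1 + \eps(1-o(1))$ and $\sum_{e \ni v} \mu'(e,c) \le 1$, so conditions (b) and (c) are preserved with only slightly degraded slack. Concentration around these means follows from Talagrand's inequality: by (a), every single coin flip alters each relevant sum by at most $O(\delta^{-1})$, while the sums themselves are of order $1$, giving Gaussian-type deviations of order roughly $\delta^{-1/2}\log \delta = o(\eps)$. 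A standard Lovász Local Lemma application then produces a single outcome of the nibble in which all three conditions hold with updated parameters.

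I would iterate this nibble for $T = \Theta(\log \delta)$ rounds. After each round, rescaling has inflated the weights by $\beta^{-1}$; if some weight exceeds the upper bound $\delta^{-1}$, I would remove the corresponding colour from the edge's list (each edge can only lose a bounded amount of total weight this way, which is absorbed into $\eps$). After sufficiently many rounds the maximum degree of the uncoloured subgraph has been driven down to $o(\delta)$, while every surviving edge still has list-weight exceeding $1+\Omega(\eps)$. A final ``finishing step" in the style of Kahn or Molloy--Reed, based on a Lovász Local Lemma application to a random greedy colouring of the sparse residual graph, then completes the $L$-edge-colouring.

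The main obstacle is controlling the concentration of many correlated weighted quantities while keeping the weights within the allowed range $[\delta^{-2}, \delta^{-1}]$ across all iterations. Since a single weight $\mu(e,c)$ can be as large as $\delta^{-1}$, one activation is non-negligible relative to the sums of order $1$ that it feeds, so Talagrand's inequality is pushed close to its limit; the parameters $p$ and $T$ must therefore be chosen so that the cumulative loss in the slack $\eps$ is summable across all rounds and so that the rescaled weights neither fall below $\delta^{-2}$ nor systematically exceed $\delta^{-1}$. This is precisely the technical heart of the Bonamy--Delcourt--Lang--Postle framework, and the reason a weighted reformulation of Kahn's theorem is needed rather than its unweighted predecessor.
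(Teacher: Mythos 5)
This statement is not proved in the paper at all: it is quoted verbatim as Theorem~6 of Bonamy, Delcourt, Lang and Postle~\cite{bonamy2024edge} and used as a black box in the proof of \cref{thm:stars-to-graphs-list}. There is therefore no in-paper argument to compare yours against; what you have written is an attempt to reprove the cited external result, which the authors deliberately do not do.

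Judged on its own terms, your sketch correctly identifies the engine (a Kahn-style nibble with activation probabilities proportional to the weights $\mu(e,c)$), but it is an outline whose two hand-waved steps are exactly where such arguments break. First, the claim that capping rescaled weights at $\delta^{-1}$ costs each edge only ``a bounded amount of total weight \dots absorbed into $\eps$'' is unsupported: after $T=\Theta(\log\delta)$ rounds the inflation factor $\beta^{-T}$ is polynomial in $\delta$, so in principle every colour on an edge can hit the cap and the truncated weight need not be $o(\eps)$; controlling this requires tracking the distribution of the weights, not just their sums. (Relatedly, the retention probability is only \emph{at least} $e^{-2p}$, since (c) is an upper bound; making it uniform needs an equalising coin flip, without which the rescaling by $\beta^{-1}$ can violate (c).) Second, the finishing step is not specified in a way that makes sense for weighted lists: ``maximum degree of the uncoloured subgraph $o(\delta)$'' is not the relevant invariant here, since degrees can stay large while the weights are tiny, and you do not say in what quantity the residual instance is sparse or which finishing lemma applies to it. You flag these issues yourself as ``the technical heart'' of the Bonamy--Delcourt--Lang--Postle framework; as written, the proposal is a plan for a proof with the decisive steps still open, and the honest course in this paper is what the authors actually do, namely cite the result.
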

	
	We are ready now to complete the proof of \cref{thm:stars-to-graphs-list}.
	
	\begin{proof}[Proof of \cref{thm:stars-to-graphs-list}]
		Suppose we are given $k \geq 1$ and $\eps > 0$.
		We may assume that $\eps \leq 2^{-k-10}$.
		Choose $\delta_0$ as in \cref{thm:BDLP} applied with $\eps$.
		Pick $\Delta_0$ as in \cref{lem:list-partitition} applied with $\eps$ and $k$.
		We further assume that {$\Delta_0 \geq k \eps^{-4} \delta_0$.}
		Finally, let $\Delta \geq \Delta_0$ and $\gamma \geq 0$.
		
		Now consider graphs $G_1,\dots, G_k$ of maximum degree at most $\Delta$ on common vertex set $V$.
		We can assume that one of $G_1,\dots,G_k$ has maximum degree $\Delta$, and hence $\gamma \geq 1$.
		{Moreover, since every vertex in $G_1\cup\dots\cup G_k$ has maximum degree at most $k\Delta$, we may assume that $\gamma\leq k$.}
		Suppose we are given a collection of lists $L(e)$ for each $e\in G_1\cup\dots\cup G_k$ satisfying $\size{L(e)}\geq (1+2^{k+4}\eps)\gamma\Delta$.
		Our goal is to prove that  $G_1,\dots, G_k$ can be simultaneously edge-coloured from these lists.
		
		We begin with a preprocessing step, as {\cref{lem:list-partitition}} requires that the graphs $G_S$ have reasonably large minimum degrees. We ensure this by adding some edges to each~$G_S$.
		To this end, let $Q$ be a copy of $K_{2\eps \Delta}$ with a pendent edge. For each $S\subseteq [k]$ and $v \in V$ with $\deg_{S}(v) \le 2 \eps \Delta$, we attach $2 \eps \Delta - \deg_{S}(v)$ many vertex-disjoint copies of $Q$, identifying $v$ with the leaf of $Q$. Denote the resulting graphs by $\hat{G}_S$ for each $S\subseteq[k]$.
		Clearly, we have $2 \eps \Delta \le \delta(\hat{G}_S) \le \Delta(\hat{G}_S) \le \Delta$.
		
		Let $\hat{V} = \bigcup_{S\subseteq [k]}V(\hat{G}_S)$ and $\hat{\Delta} = (1+2^{k+1}\eps)\Delta$.
		Note that,  as a graph on $\hat{V}$, each $\hat{G}_S$ for~$S \subseteq [k]$ satisfies $\delta^+(\hat{G}_S) \ge 2 \eps \Delta  \ge \eps \hat{\Delta} $.
		For each $i\in [k]$, define the graph $\hat{G}_i$ on vertex set~$\hat{V}$ by
		\begin{align*}
			\hat{G}_i = \bigcup_{S \subseteq [k]\colon i \in S} \hat{G}_S .
		\end{align*}
		Note that $\hat{G}_i[V] = G_i$ and that $ \Delta(\hat{G}_i)\leq (1+2^{k+1}\eps)\Delta = \hat{\Delta}$.
		
		Set $\hat{G} = \hat{G}_1\cup\dots\cup \hat{G}_k$.
		By assumption, for each $v\in V$, the stars $R_1(v), \dots, R_k(v)$ have a simultaneously proper colouring using at most $\gamma\Delta$ colours.
		Now, write $\hat{R}_1(v),\dots, \hat{R}_k(v)$ for the stars centred at each $v\in \hat{V}$ in $\hat{G}_1,\dots, \hat{G}_k$. For each $v\in V\subseteq \hat{V}$, we have $\deg_{\hat{G}}(v)\leq \deg_{G}(v)+ 2^{k+1}\eps\Delta$, which implies that the stars $\hat{R}_1(v),\dots, \hat{R}_k(v)$ can be simultaneously coloured with at most $\gamma\Delta+2^{k+1}\eps\Delta \leq \gamma(1+2^{k+1}\eps)\Delta = \gamma\hat{\Delta}$ colours, where we used that $\gamma\geq 1$. Every vertex $v\in \hat{V}\setminus V$ is a non-leaf vertex from a copy of $Q$ and is contained in a single $\hat{G}_S$, thus implying that $\deg_{\hat{G}}(v)\leq 2\eps \Delta\leq \hat{\Delta}$. So the stars around vertices of this kind can also be simultaneously properly coloured with $\gamma\hat{\Delta}$ colours.

		For each edge $e\in G$, we have $\size{L(e)}\ge (1+5\eps) \gamma \hat{\Delta}$.
		We can extend the list assignments by giving each edge in $\hat{G}-(G_1\cup\dots\cup G_k)$ an arbitrary list of size $ \lceil(1+5\eps) \gamma \hat{\Delta}  \rceil$.

		Apply \cref{lem:list-partitition} to $\hat{G}_1,\dots, \hat{G}_k$, with $\hat{\Delta}$ playing the role of $\Delta$, the same~$\gamma$, and  the lists $\set{L(e)\colon e\in \hat{G}}$ to obtain the sparsified lists $\set{L'(e)\colon e\in \hat{G}}$ satisfying properties~\ref{itm:partition-disjointness},~\ref{itm:partition-sizes}, and~\ref{itm:partition-conflicts}. Note that, by property~\ref{itm:partition-disjointness}, it suffices to find a proper $L'$-edge-colouring $\phi_S$ of each $\hat{G}_S$ in turn and then colour the edges of $G$ by setting $\phi(e) = \phi_S(e)$, where $S$ is the profile of $e$ (cf.\ \cref{def:profile}).
		
		For $S \subset [k]$ fixed, our plan is to apply \cref{thm:BDLP} to find a proper $L'$-edge-colouring of~$G_S$.
		In the following, we abbreviate  $\deg_S(v) = \deg_{\hat G_S}(v)$.
		To this end, we define a weighted list assignment by setting, for each edge~$ uv \in \hat{G}_S$ and colour~$c \in L'(uv)$,
		\begin{align*}
			\mu(uv,c) = \frac{\gamma \hat{\Delta}} { \deg_{S}(u) \deg_{S}(v)}.
		\end{align*}
		We verify that this  assignment satisfies the required properties of \cref{thm:BDLP}.
		Let $\delta = \eps^2 \hat{\Delta}/\gamma$.
		Since $\hat{\Delta} \ge \Delta \ge \Delta_0 \geq  k \eps^{-4} \delta_0 \ge \gamma \eps^{-4}\delta_0$, we have $\delta \geq \delta_0 \ge 1$.
		Moreover
		\begin{align*}
			{\delta^2 \geq \frac{\eps^4\Delta^2}{\gamma^2}  \geq \frac{\Delta}{\gamma}\delta_0 \geq \frac{\Delta}{\gamma}.}
		\end{align*}
		For each $uv \in \hat{G}_S$ and colour $c \in L'(uv)$, we have
		\begin{align*}
			\delta^{-2} & \le  \frac{\gamma}{{\Delta}} \le \frac{\gamma \hat{\Delta}} { \deg_{S}(u) \deg_{S}(v)} \le \frac{\gamma}{ \eps^2 \hat{\Delta}}  = \delta^{-1}
		\end{align*}
		verifying property~\ref{BDLP:mu range}.
		Further, for every $uv\in \hat{G}_S$, by property~\ref{itm:partition-sizes}, we have
		\begin{align*}
			\sum_{c \in L'(uv)} \mu (uv,c) & = |L'(uv)| \frac{ \gamma \hat{\Delta}}{\deg_{S}(u)\deg_{S}(v)} \ge  1 + \eps,
		\end{align*}
		so property~\ref{BDLP:list size} is also satisfied.
		Finally, for every $v\in \hat{V}$ and $c\in \bigcup_{uv\in \hat{G}_S}L'(uv)$, property~\ref{itm:partition-conflicts} of the lists $L'$ immediately implies that~\ref{BDLP:conflicts} also holds.
		
		Therefore, \cref{thm:BDLP} yields {a proper $L'$-edge-colouring of each $\hat{G}_S$. Taking the union of these colourings yields}  a simultaneous  colouring of $\hat{G}_1,\dots,\hat{G}_k$ from the lists $L'$, which in turn induces a simultaneous colouring of $G_1,\dots, G_k$.
	\end{proof}

	\subsection{Sparsification}\label{sec;list-partition}
	
	It remains to show \cref{lem:list-partitition}.
	We require the following two probabilistic tools.
	
	\begin{theorem}[Hoeffding's inequality~{\cite{hoeffding1963probability}}]\label{thm:hoeffding}
		Let $X_1,\dots, X_m$ be independent random variables taking values in $[0,c]$, and write $X = X_1+\dots + X_m$ and $\mu = \Exp [X]$. Then, for any $t>0$, we have
		\begin{align*}
			\Pr[\size{X-\mu} \geq t] \leq 2\exp\left({-\frac{2t^2}{ m c^2}}\right).
		\end{align*}
	\end{theorem}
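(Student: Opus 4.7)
The plan is to use the classical Chernoff (exponential-Markov) method. By applying the same argument to both $X - \mu$ and $\mu - X$ (equivalently, to the sums of $X_i$ and of $c - X_i$, both of which are sums of independent $[0,c]$-valued variables) and taking a union bound, it suffices to prove the one-sided estimate
\[
\Pr[X - \mu \geq t] \leq \exp\!\bigl(-2t^2/(mc^2)\bigr);
\]
the factor $2$ in the statement accommodates this symmetrisation.

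For the upper tail, for any $\lambda > 0$, Markov's inequality applied to the exponential moment gives
\[
\Pr[X - \mu \geq t] \;=\; \Pr\!\bigl[e^{\lambda(X-\mu)} \geq e^{\lambda t}\bigr] \;\leq\; e^{-\lambda t}\,\Exp\!\bigl[e^{\lambda(X-\mu)}\bigr].
\]
Since the $X_i$ are independent, the expectation factorises as $\prod_{i=1}^m \Exp[e^{\lambda (X_i - \Exp[X_i])}]$. The core estimate is Hoeffding's lemma: if $Y$ is a zero-mean random variable supported in an interval $[a,b]$, then $\Exp[e^{\lambda Y}] \leq \exp(\lambda^2(b-a)^2/8)$. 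Each centred summand $X_i - \Exp[X_i]$ lies in an interval of length at most $c$, so this yields $\Exp[e^{\lambda (X_i - \Exp[X_i])}] \leq \exp(\lambda^2 c^2/8)$. Combining everything,
\[
\Pr[X - \mu \geq t] \;\leq\; \exp\!\bigl(-\lambda t + m\lambda^2 c^2/8\bigr),
\]
and the right-hand side is minimised at $\lambda = 4t/(mc^2)$, giving precisely $\exp(-2t^2/(mc^2))$.

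The only genuinely non-trivial ingredient is Hoeffding's lemma, which I would establish via convexity. Since $y \mapsto e^{\lambda y}$ is convex on $[a,b]$, we have $e^{\lambda y} \leq \tfrac{b-y}{b-a} e^{\lambda a} + \tfrac{y-a}{b-a} e^{\lambda b}$; taking expectations and using $\Exp[Y]=0$ reduces the claim to the scalar inequality
\[
\psi(u) \;:=\; -pu + \log\!\bigl(1 - p + p e^u\bigr) \;\leq\; u^2/8 \qquad \text{for all $u \in \REALS$ and $p \in [0,1]$,}
\]
where $u = \lambda(b-a)$ and $p = -a/(b-a)$. One verifies $\psi(0) = \psi'(0) = 0$ and $\psi''(u) \leq 1/4$ (the last bound is just the observation that a Bernoulli variance is at most $1/4$), whence Taylor's theorem gives the claim.

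The main technical obstacle is the second-derivative estimate $\psi'' \leq 1/4$, a short calculus exercise; everything else is bookkeeping. No combinatorial input specific to the present paper is required, so the argument is essentially the textbook proof of Hoeffding's inequality.
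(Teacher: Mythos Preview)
Your proof is correct and is the standard textbook argument for Hoeffding's inequality. Note, however, that the paper does not prove this statement at all: it is quoted as a known tool with a citation to Hoeffding's original paper, so there is no ``paper's own proof'' to compare against. Your write-up is a faithful rendition of the classical Chernoff--Hoeffding argument (exponential Markov, independence to factorise, Hoeffding's lemma via convexity and the $\psi'' \le 1/4$ bound, then optimise in $\lambda$), and nothing more is needed here.
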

	
	\begin{theorem}[Lov\'asz Local Lemma~{\cite{erdos1975problems}}]\label{thm:LLL}
		Let $A_1,\dots,A_m$ be events in an arbitrary probability space. Suppose that $\Pr[A_i]\leq p$ for all $i\in [m]$ and that each $A_i$ is independent of all but at most $d$ other events. If $(d+1)pe\leq 1$, then with positive probability none of the events occur.
	\end{theorem}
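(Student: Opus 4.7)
The plan is to reduce the symmetric statement to the more general asymmetric Lov\'asz Local Lemma and then prove the latter by induction. Concretely, I would establish the following: if one can find reals $x_1,\dots,x_m \in [0,1)$ with $\Pr[A_i] \leq x_i \prod_{j \in D(i)}(1-x_j)$ for every $i$, where $D(i)$ denotes the set of indices $j \neq i$ such that $A_i$ depends on $A_j$, then $\Pr[\bigcap_{i=1}^{m} \overline{A_i}] \geq \prod_{i=1}^{m}(1-x_i) > 0$. To recover the statement as written, take $x_i = 1/(d+1)$ for every $i$; combining the elementary inequality $(1 - 1/(d+1))^d \geq 1/e$ with the assumption $(d+1)pe \leq 1$ verifies the asymmetric hypothesis.

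The heart of the argument is the inductive claim that, for every $i \in [m]$ and every $S \subseteq [m] \setminus \{i\}$ with $\Pr[\bigcap_{j \in S}\overline{A_j}] > 0$, one has $\Pr[A_i \mid \bigcap_{j \in S}\overline{A_j}] \leq x_i$. Induction is carried out on $|S|$, the base case $S = \emptyset$ being immediate from the hypothesis. For the inductive step, I would partition $S = S_1 \cup S_2$ with $S_1 = S \cap D(i)$ and $S_2 = S \setminus D(i)$ and rewrite
$$\Pr\Bigl[A_i \Bigm| \bigcap_{j \in S} \overline{A_j}\Bigr] = \frac{\Pr\bigl[A_i \cap \bigcap_{j \in S_1} \overline{A_j} \bigm| \bigcap_{j \in S_2} \overline{A_j}\bigr]}{\Pr\bigl[\bigcap_{j \in S_1} \overline{A_j} \bigm| \bigcap_{j \in S_2} \overline{A_j}\bigr]}.$$
The numerator is bounded by $\Pr[A_i \mid \bigcap_{j \in S_2}\overline{A_j}] = \Pr[A_i] \leq x_i \prod_{j \in D(i)}(1-x_j)$, where the equality uses that $A_i$ is independent of the joint law of the events indexed by $S_2 \subseteq [m] \setminus (D(i) \cup \{i\})$. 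For the denominator, enumerate $S_1 = \{j_1,\dots,j_r\}$ and apply the chain rule: each factor equals $1 - \Pr[A_{j_t} \mid \bigcap_{s < t} \overline{A_{j_s}} \cap \bigcap_{j \in S_2}\overline{A_j}]$, which by the inductive hypothesis (applied to a strictly smaller conditioning set) is at least $1 - x_{j_t}$. Since $S_1 \subseteq D(i)$ and each factor $1 - x_j \in (0,1]$, the denominator is at least $\prod_{j \in D(i)}(1 - x_j)$, and cancellation delivers the required bound on the ratio.

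With the claim in hand, the conclusion follows by iterating the chain rule once more,
$$\Pr\Bigl[\bigcap_{i=1}^{m} \overline{A_i}\Bigr] = \prod_{i=1}^{m} \Pr\Bigl[\overline{A_i} \Bigm| \bigcap_{j<i} \overline{A_j}\Bigr] \geq \prod_{i=1}^{m}(1 - x_i) > 0,$$
and specialising $x_i = 1/(d+1)$ then recovers the symmetric statement. The step requiring the most care is well-definedness of the conditional probabilities: one must ensure throughout that every conditioning event has strictly positive probability, else the denominator vanishes. This is handled by strengthening the inductive statement to also assert $\Pr[\bigcap_{j \in S}\overline{A_j}] > 0$; this strengthening propagates automatically, since the bound $\Pr[A_i \mid \bigcap_{j \in S}\overline{A_j}] \leq x_i < 1$ forces the complementary conditional probability to be strictly positive at each stage of the chain-rule expansion.
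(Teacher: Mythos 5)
The paper does not prove this statement: it is quoted as a classical black box from Erd\H{o}s--Lov\'asz, so there is no internal proof to compare against. Your argument is the standard textbook proof --- deduce the symmetric form from the asymmetric (general) Local Lemma with $x_i = 1/(d+1)$ and $(1-1/(d+1))^d \geq 1/e$, proving the latter by induction on the size of the conditioning set --- and it is correct, including the two points that usually need care: the numerator bound genuinely requires $A_i$ to be independent of the \emph{joint} distribution of the events outside its dependency set (which is the intended reading of the hypothesis), and the positivity of all conditioning events is correctly propagated through the induction.
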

	
	\begin{proof}[Proof of \cref{lem:list-partitition}]
		Without loss of generality, we assume that $\eps<1/10$.
		Moreover, we assume that $\Delta$ is sufficiently large and will specify~$\Delta_0$ in due course. Set $m = (1+\eps) \gamma \Delta$ and $m' = (1+5\eps) \gamma \Delta$.
		We may assume that $\size{L(e)} = m'$ for all $e\in G$.
		
		Recall that, for each vertex $v\in V$ and index $i\in [k]$, we write $R_i(v)$ for the star in $G_i$ consisting of all edges incident to $v$.
		Denote by $R(v)$ the union of the stars $R_1(v),\dots,R_k(v)$.
		By assumption, there is a simultaneous colouring~$\psi_v$ of $R_1(v),\dots,R_k(v)$ using the colours $[m]$.
		
		Write $L(v) = \bigcup_{e\in R(v)}L(e)$.
		Let $\sigma_v\colon L(v) \to [m]$ be a map obtained by picking each~$\sigma_v(c)$ uniformly at random from $[m]$, all choices being made independently.
		For each~$S\subseteq [k]$, write
		\begin{align*}
			L_S(v) = \set{c\in L(v)\colon \sigma_v(c) = \psi_v(e) \text{ for some }e\in G_S \cap R(v)}.
		\end{align*}
		In the following, every edge incident to $v$ in the graph $G_S$ shall take a colour from $L_S(v)$.
		Thus, we assign the available colours for an edge $uv \in G_S$  to be
		\[L'(uv)=L(uv)\cap L_S(u) \cap L_S(v).\]
		Observe that two sets $L_S(v)$ and $L_{S'}(v)$ can intersect only if there exist edges ${vw} \in G_S$ and ${vw'} \in G_{S'}$ with $\psi_v({vw})=\psi_v({vw'})$, which in turn implies that $S \cap S' = \emptyset$.
		In other words, for  any pair of distinct intersecting subsets $S,S'\subseteq [k]$, we have $L_S(v)\cap L_{S'}(v)=\emptyset$.
		
		Consider an arbitrary $i\in [k]$ and a pair of edges $e,e'\in G_i$ that share a vertex~$v$ but have  different profiles $S$ and~$S'$, respectively. Thus $i\in S\cap S'$, so the previous paragraph implies that $L_S(v)\cap L_{S'}(v)=\emptyset$.
		Then
		\begin{align*}
			L'(e) \cap L'(e') \subseteq  L_S(v) \cap  L_{S'}(v) = \emptyset,
		\end{align*}
		so property~\ref{itm:partition-disjointness} always holds.
		
		\medskip
		In the following, we show that properties~\ref{itm:partition-sizes} and~\ref{itm:partition-conflicts} hold with positive probability.
		Observe that, for every colour $c\in L(v)$ and every $S\subseteq [k]$, we have
		\begin{align}\label{eq:probability-c-in-L_S}
			\Pr[c\in L_S(v)] = \frac{\deg_{S}(v)}{m}.
		\end{align}
		Note also that, for fixed $S \subseteq [k]$, the events $c \in L_S(v)$ are {pairwise} independent for vertices~$v $ and colours~$c$. So, for any  $uv\in G_S$, the list size $ \size{L'(uv)}$ follows a binomial distribution with~$m'$ trials and success probability $p = {\deg_{S}(v)\deg_{S}(u)}/{m^2}$.
		Hence
		\begin{align*}
			\Exp [\size{L'(uv)}] = pm' = \frac{1+5\eps}{(1+\eps)^2} \frac{ \deg_{S}(v)\deg_{S}(u)}{\gamma \Delta}
			\ge (1+2 \eps) \frac{ \deg_{S}(v)\deg_{S}(u)}{\gamma \Delta}.
		\end{align*}
		By \cref{thm:hoeffding} applied with $X_1+\dots+X_{m'} \sim \text{Bin}(m',p)$, we have
		\begin{align*}
			& \quad \Pr\left( \size{L'(uv)}  < (1+\eps) \frac{\deg_{S}(u)\deg_{S}(v)}{\gamma\Delta} \right)                             \\
			& \le \Pr\left( \Big| \size{L'(uv)} - \Exp [\size{L'(uv)}] \Big| >  \frac{\eps\deg_{S}(u)\deg_{S}(v)}{\gamma\Delta} \right) \\
			& \leq 2\exp\parens*{-\frac{ 2 (\eps \deg_{S}(v)\deg_{S}(u))^2}{(1+5\eps){\gamma}^3 \Delta^3}}                              \\
			& \le 2\exp\parens*{-\frac{ \eps^2 (\delta^+(G_S))^4}{{\gamma}^3 \Delta^3}}
			\le 2\exp\parens*{-\frac{ \eps^6 \Delta}{\gamma^3}},
		\end{align*}
		{where the final inequality holds since  $\delta^+(G_S)\geq\varepsilon\Delta$. This bounds the probability of~\ref{itm:partition-sizes} failing to hold for a particular $uv\in G_S$.}
		
		Similarly, given $v \in V$, $S \subseteq [k]$,  and $c \in \bigcup_{uv\in G_S}L(uv)$,  write
		
		\begin{align*}
			T(v,S,c) & = \sum_{\substack{uv \in G_S \colon \\  c \in L_S(u)\cap L(uv)}} \frac{\gamma\Delta}{\deg_{S}(u) }
			= 	\sum_{\substack{uv \in G_S \colon  c \in L(uv)}}  \mathbbm{1}_{c \in L_S(u) }\frac{\gamma\Delta}{\deg_{S}(u) },
		\end{align*}
		and note that $T(v,S,c)/\deg_S(v)$ is an upper bound for the quantity we wish to bound in \ref{itm:partition-conflicts}, namely $\sum_{uv \in G_S : c \in L'(uv)} \frac{ \gamma \Delta}{\deg_{S}(u)\deg_{S}(v)}$, since $L'(uv)\subseteq L_S(u)\cap L(uv)$.
		
		{We bound} 
		\begin{align*}
			\Exp \brackets*{T(v,S,c)} & =
			\sum_{\substack{uv \in G_S \colon c \in L(uv)}} \Pr[c\in L_S(u)] \frac{\gamma\Delta }{\deg_{S}(u)}                                    \\
			& = \sum_{\substack{uv \in G_S \colon c \in L(uv)}}\frac{\deg_{S}(u) }{ m} \frac{\gamma\Delta}{\deg_{S}(u)} \\
			& \le  \deg_{S}(v)\frac{\gamma\Delta}{m}
			\le (1-\eps/2)\deg_{S}(v).
		\end{align*}
		
		Our plan is to apply \cref{thm:hoeffding} with a variable $X_u$, defined for each $uv\in G_S$ with $c\in L(uv)$.  We set $X_u$ to be $\frac{\gamma\Delta}{\deg_{S}(u) }$ if $c\in L_S(u)$ and 0 otherwise. Note that $0\leq X_u\leq\frac{\gamma\Delta}{\delta^+(G_S)}\leq \frac{\gamma}{\varepsilon }$, and the number of variables $X_u$ is at most $\deg_S(v)$. This gives
		\begin{align*}
			\Pr \left( T(v,S,c) > \deg_{S}(v) \right)
			& \le
			\Pr \left( \Big|  T(v,S,c) - \Exp \brackets*{T(v,S,c)} \Big| > \frac{\eps}{2} \deg_{S}(v)\right) \\
			& \le
			2 \exp\parens*{-\frac{\varepsilon^2\deg_S(v)^2 }{2\deg_S(v) (\gamma/\varepsilon)^2 }}
			\le 2 \exp\parens*{-\frac{\eps^5 \Delta}{ 2 \gamma^2 }},
		\end{align*}
		where the final inequality holds since  $\deg_S(v)\geq\delta^+(G_S)\geq\varepsilon\Delta$. This bounds the probability of~\ref{itm:partition-conflicts} failing to hold for a particular choice of $v,S,c$.
		
		\medskip
		To finish, observe that each of the above bad events, $\size{L'(uv)}$ being too small for some $uv\in G$ and $T(v,S,c)$ being too large for some choice of $v,S,c$ occurs with probability at most $p' = 2\exp(-\eps^{6}\Delta/\gamma^3)\leq 2\exp(-\eps^{6}\Delta/k^3)$. Note that each of these events is independent of the events associated to vertices or edges at distance at least three from the corresponding vertex or edge. Choose $\Delta_0$ to be such that $4 \exp(-\eps^{6}\Delta_0/k^3)\Delta_0^4\leq 1$. Thus, provided that $\Delta\geq \Delta_0$, we can apply \cref{thm:LLL} with $p'$ as defined above and $d = \Delta^4$ to conclude that properties~\ref{itm:partition-sizes} and~\ref{itm:partition-conflicts} hold with positive probability.
		This shows that there is a deterministic choice for the desired objects.
	\end{proof}

	\section{Lower bound constructions}\label{sec:constructions}
	
	In this section, we provide constructions that shed some light on lower bounds for the simultaneous colouring problem.
	
	First, recall that Cabello conjectured that $\chi'(G_1,G_2)\leq \Delta+2$ for any choice of $G_1,G_2$. Interestingly, it is not {known} whether this is best possible, that is, there are no known pairs requiring $\Delta+2$ colours. It is natural to ask whether $\chi'(G_1,G_2) \le \max\set{\chi'(G_1),\chi'(G_2)}$ always holds.
	It turns out that this is not true.
	
	\begin{proposition} \label{prop:bipartitelowerbound}
		For $\Delta \ge2$, there exist graphs $G_1,G_2$ such that $\Delta(G_i) = \chi'(G_i) = \Delta$ and $G_1 \cup G_2$ is bipartite but $\chi'(G_1,G_2) \ge \Delta+1$.
	\end{proposition}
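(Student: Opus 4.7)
The plan is to give an explicit construction exhibiting, for $\Delta = 2$, a parity (XOR) obstruction that makes a simultaneous $2$-edge-colouring impossible. The guiding idea is that every proper $2$-edge-colouring of a connected bipartite graph of maximum degree~$2$ is determined up to a single binary choice, so that shared edges between two such graphs impose a system of parity constraints which may be inconsistent.

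For $\Delta = 2$, I propose the following pair on vertex set $\{1,\dots,8,a,b,x,y\}$, writing $ij$ for the edge between vertices $i$ and $j$. Let $G_1$ be the vertex-disjoint union of the $4$-cycles $C_1 = \{12,23,34,41\}$ and $C_2 = \{56,67,78,85\}$, and let $G_2$ be the vertex-disjoint union of the $6$-cycles $D_1 = \{12, 2a, a5, 56, 6b, b1\}$ and $D_2 = \{34, 48, 87, 7x, xy, y3\}$. The bipartition $A = \{1,3,6,8,a,x\}$, $B = \{2,4,5,7,b,y\}$ is consistent with every edge, so $G_1 \cup G_2$ is bipartite. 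Each $G_i$ has maximum degree~$2$ and is bipartite, hence $\chi'(G_i) = 2$ by K\"onig's theorem. In any proper $2$-edge-colouring of $G_1$, opposite edges of a $4$-cycle share a colour, giving $c(12) = c(34)$ and $c(56) = c(78)$. In any proper $2$-edge-colouring of $G_2$, two edges of a $6$-cycle share a colour if and only if they are separated by an even number of steps around the cycle: the shared edges $34$ and $78$ in $D_2$ are separated by $2$ steps, forcing $c(34) = c(78)$, while the shared edges $12$ and $56$ in $D_1$ are separated by $3$ steps, forcing $c(12) \neq c(56)$. These four constraints together yield $c(12) = c(34) = c(78) = c(56) \neq c(12)$, a contradiction, so $\chi'(G_1,G_2) \geq 3 = \Delta + 1$.

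For $\Delta \geq 3$, I plan to lift this construction, for instance by blowing up each vertex of the $\Delta = 2$ pair into a set of size $\lceil \Delta/2 \rceil$ and replacing each edge by the corresponding complete bipartite block, with a pendant adjustment when $\Delta$ is odd; the resulting graphs are bipartite with maximum degree~$\Delta$. One then extracts a $\mathbb{Z}/2$-valued invariant from any simultaneous $\Delta$-colouring of the blown-up pair (for example, a parity associated to the matching pattern inside each block) and verifies that these invariants satisfy the same XOR system as in the base case, forcing a contradiction again.

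The main obstacle is exactly this last step. The $2$-colour argument is clean because the colour space of each bipartite cycle amounts to a single binary choice per component, whereas $\Delta$-colourings with $\Delta \geq 3$ admit far more freedom and the parity structure is not immediate. Isolating a binary-valued invariant that is stable under relabellings of the $\Delta$ colours and that faithfully encodes the parity obstruction from the base case is the crux of extending the construction beyond $\Delta = 2$.
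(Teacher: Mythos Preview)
Your $\Delta=2$ construction is correct and the parity argument is clean, but the proposal does not prove the proposition: for $\Delta\ge 3$ you only sketch a blow-up and explicitly flag the extraction of a relabelling-stable binary invariant as an unresolved ``main obstacle''. That is a genuine gap, not a detail. Replacing edges by complete bipartite blocks destroys the rigidity that made the $\Delta=2$ argument work: a $\Delta$-edge-colouring of $K_{\lceil\Delta/2\rceil,\lceil\Delta/2\rceil}$ carries no canonical $\mathbb Z/2$-valued invariant, and there is no reason the XOR system should survive. As written, the proof establishes only the case $\Delta=2$.

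The paper sidesteps this entirely with a construction that is uniform in $\Delta$. Take $H=K_{\Delta,\Delta}$ minus one edge $xy$; the key observation (a one-line matching-decomposition argument) is that in \emph{any} proper $\Delta$-edge-colouring of $H$, the colour missing at $x$ equals the colour missing at $y$. Now set $G_1=H\cup\{ax,by\}$ for new vertices $a,b$, and let $G_2$ be the star at $y$ inside $H$ together with $ax$ and $xy$. In a simultaneous $\Delta$-colouring, the lemma forces $\phi(ax)=\phi(by)$ via $G_1$, so this colour is absent from the $H$-edges at $y$; but in $G_2$ the vertex $y$ has degree $\Delta$ and $\phi(xy)\ne\phi(ax)$, so the colour $\phi(ax)$ must appear on some $H$-edge at $y$ --- a contradiction. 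This replaces your sought-after parity invariant with a single forced colour, and works verbatim for every $\Delta\ge 2$.
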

	
	Recall $\chi'(G) = \Delta$ if $G$ is bipartite by Kőnig's theorem. We show that $\chi'(G_1,G_2)$ can exceed~$\Delta$ even when $G_1 \cup G_2$ is bipartite. We begin with a simple auxiliary proposition, showing that there exists a graph in which every proper $\Delta$-edge-colouring contains some forced structure.
	
	\begin{proposition}\label{prop:same_set}
		Let $\Delta\geq 2$ be an integer and~$H$ be a copy of $K_{\Delta, \Delta}$ with one edge removed. Denote the vertices of degree $\Delta-1$ in $H$ by $x$ and $y$. Then, for any $\Delta$-edge-colouring $\phi$ of~$H$, the set of colours appearing on edges incident to $x$ is the same as that of $y$, that is,
		$\set{\phi(xu) \colon xu\in H} = \set{\phi(yv)\colon yv\in H}$.
	\end{proposition}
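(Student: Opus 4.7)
The plan is a short counting argument on the sizes of the colour classes in $\phi$. First I would record two basic facts about $H$: it has $\Delta^2 - 1$ edges, and since it is bipartite with both parts of size $\Delta$, every matching in $H$ (in particular every colour class of $\phi$) has size at most $\Delta$. Summing the sizes of the $\Delta$ colour classes gives $\Delta^2 - 1$, so the only possibility is that exactly one colour class has size $\Delta - 1$, say the class of a colour $c^\ast$, while every other colour class is a perfect matching of $H$.

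From here, the conclusion is quick. Any colour $c \neq c^\ast$ forms a perfect matching and therefore appears on an edge incident to every vertex of $H$, so no vertex can miss a colour other than $c^\ast$. Since $x$ and $y$ both have degree $\Delta - 1$, each of them misses exactly one colour from $[\Delta]$, and that colour must be $c^\ast$ in both cases. Hence
\[
\set*{\phi(xu) \colon xu \in H} \;=\; [\Delta] \setminus \set*{c^\ast} \;=\; \set*{\phi(yv) \colon yv \in H},
\]
which is the claimed equality.

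I do not anticipate a real obstacle: both steps reduce to standard degree bookkeeping for edge-colourings of bipartite graphs, and the role of the single missing edge $xy$ is precisely to force exactly one defective colour class whose two unsaturated vertices must then be the only two vertices of sub-maximum degree, namely $x$ and $y$.
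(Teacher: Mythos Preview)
Your argument is correct and is essentially the same as the paper's proof: the paper asserts that a proper $\Delta$-edge-colouring of $H$ decomposes it into $\Delta-1$ perfect matchings and one smaller matching, and you supply exactly the counting justification for this claim before drawing the identical conclusion that $x$ and $y$ both miss the single defective colour.
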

	\begin{proof}
		Any proper $\Delta$-edge-colouring of~$H$ corresponds to a decomposition of~$H$ into $\Delta-1$ perfect matchings and one smaller matching. We deduce that both $x$ and $y$ must be isolated vertices in the smaller matching, so they are both missing the same colour.
	\end{proof}
	
	\begin{proof}[Proof of \cref{prop:bipartitelowerbound}]
		Let $H$ be the graph given by~\cref{prop:same_set} with special vertices $\{x,y\}$ and let $a,b$ be two distinct new vertices.
		Define~$G_1$ to be the graph obtained from~$H$ by adding the edges~$ax, by$.
		Define $G_2$ to be such that $E(G_2) = \{ax,xy, yv \colon v \in N_{H}(y) \}$.
		Clearly $G_1 \cup G_2$ is bipartite and $\Delta(G_1) = \Delta(G_2)= \Delta$.
		We now show that $\chi'(G_1,G_2) > \Delta$.
		Suppose to the contrary that there is a simultaneously proper $\Delta$-edge-colouring $\phi$ of~$G_1,G_2$ and write $c = \phi(a x)$.
		By \cref{prop:same_set} applied to~$G_1$, we deduce that $\phi(b y) = \phi(a x) = c$.
		Hence, $c \notin \{\phi(yv) : v \in N_{G_1}(y) \setminus \{b\}  = N_H(y) \}$.
		On the other hand, since $ax,xy \in G_2$, we have $\phi(xy) \ne  \phi(a x) = c$.
		Recall that $y$ has degree~$\Delta$ in~$G_2$, so $\phi(vy) = c$ for some $v  \in N_{G_1} (y) \setminus \{x\} = N_H(y)$, a contradiction.
	\end{proof}
	
	Next, we show that simultaneous analogue of the List Colouring Conjecture is false.
	
	\begin{proposition} \label{prop:list_counter}
		For $\Delta \ge 2$, there exist graphs $G_1,G_2$ such that $\chi'(G_1,G_2) = \Delta$ but $\chi'_{\lst}(G_1,G_2) \ge \Delta+1$.
	\end{proposition}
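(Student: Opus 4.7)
I would approach this by constructing $G_1$ and $G_2$ explicitly, exploiting the rigidity of edge-colourings established in \cref{prop:same_set}. The plan is to build $G_1$ and $G_2$ so that each admits a proper simultaneous $\Delta$-edge-colouring, giving $\chi'(G_1,G_2)=\Delta$, while also carrying enough internal structure that any simultaneous list-colouring from adversarial lists of size $\Delta$ is blocked. A natural starting point is the graph $H$ of \cref{prop:same_set}, possibly together with one or more auxiliary ``bridging'' edges that lie in only one of the two graphs. The rigidity lemma forces the special vertices $x,y$ of each copy of $H$ to miss a common colour in any proper $\Delta$-edge-colouring, so specifying lists on the auxiliary edges whose contents conflict with this forced missing colour should yield the obstruction.

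Concretely, the proposed steps are:
\begin{enumerate}[label=\textup{(\arabic*)}]
	\item Define $G_1$ and $G_2$ by enriching (one or several copies of) $H$ with bridging edges, assigned to $G_1$ or $G_2$ individually, so that $\Delta(G_i)\le\Delta$ and a proper simultaneous $\Delta$-edge-colouring exists.
	\item Exhibit this simultaneous colouring explicitly, extending any proper $\Delta$-edge-colouring of $H$ by assigning the forced ``missing'' colour $c$ to each auxiliary edge incident to $x$ or $y$. Combined with the trivial bound $\chi'(G_1,G_2)\ge\Delta$ from $\Delta(G_i)\le\Delta$, this gives $\chi'(G_1,G_2)=\Delta$.
	\item Prescribe lists $L(e)$ of size $\Delta$ adversarially: give each $H$-edge the palette $\{1,\dots,\Delta\}$, so that the conclusion of \cref{prop:same_set} continues to apply; then choose lists on the auxiliary edges to be incompatible with the forced missing colour regardless of which value of $c\in\{1,\dots,\Delta\}$ arises.
	\item Finish with a case analysis over the possible values of $c$ to confirm that no simultaneous proper $L$-edge-colouring exists, yielding $\chi'_{\lst}(G_1,G_2)\ge\Delta+1$.
\end{enumerate}

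The main obstacle is balancing the two competing requirements: $\chi'(G_1,G_2)=\Delta$ precludes the kind of rigid chromatic obstruction built in \cref{prop:bipartitelowerbound}, while the list construction must nevertheless block every colouring. The key asymmetry to exploit is that in the chromatic version the missing colour $c$ is a free parameter chosen by the colourer, whereas in the list version $c$ must lie inside the intersection of the prescribed bridging-edge lists. Engineering the graphs and lists so that this intersection is empty for every $c\in\{1,\dots,\Delta\}$, while a single globally available palette of $\Delta$ colours still allows a valid simultaneous colouring, is where the argument of Mogge and Parczyk is leveraged.
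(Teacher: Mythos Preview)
Your outline matches the paper's strategy at a high level: both use the rigidity of \cref{prop:same_set}, bridging edges lying in only one $G_i$, and adversarial lists on the bridging edges. But the proposal stops short of an actual construction, and two concrete devices are needed that your steps (3)--(4) do not supply.

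First, the paper does \emph{not} do a case analysis over all possible missing colours $c\in[\Delta]$. Instead it builds a gadget $\hat G$ (containing one copy of $H$, a pendant edge $xz$, and further structure), takes $\Delta$ vertex-disjoint copies of $\hat G$, and identifies all the copies of $z$ into a single vertex $z_0$ of degree~$\Delta$. In any simultaneous $L$-colouring the $\Delta$ edges at $z_0$ exhaust $[\Delta]$, so in exactly one copy the edge $xz$ receives colour~$\Delta$, forcing the missing colour at $y$ in that copy to be~$\Delta$. Thus only one value of $c$ needs to be blocked, not all of them. Your plan of making the bridging lists ``incompatible with every $c\in[\Delta]$'' is not obviously realisable: a bridging edge whose list avoids $[\Delta]$ entirely creates no obstruction, since it is free to take any of its $\Delta$ colours.

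Second, the contradiction inside the distinguished copy comes from a \emph{second} application of \cref{prop:same_set}. The gadget contains a further copy $H'$ (with special vertices $x',y'$) whose edges all carry the shifted palette $[\Delta+1,2\Delta]$, together with bridging edges $yx'\in\hat G_1$ and $yy'\in\hat G_2$ with lists $[\Delta-1]\cup\{\Delta+1\}$ and $[\Delta-1]\cup\{\Delta+2\}$. Once the missing colour at $y$ is forced to be~$\Delta$, these edges must receive $\Delta+1$ and $\Delta+2$ respectively; then $x'$ and $y'$ miss different colours in $H'$, contradicting \cref{prop:same_set} applied to~$H'$. Your sketch mentions only ``one or several copies of $H$'' and does not indicate this second copy with disjoint palette, which is where the contradiction actually lives.
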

	
	\begin{proof}
		We again rely on \cref{prop:same_set}.
		Let $H$ and $H'$ be two disjoint copies of the graph given by~\cref{prop:same_set} with special vertices $\{x,y\}$ and $\{x',y'\}$, respectively.
		Let $z$ be a new vertex.
		Define $\hat{G}_1  = H \cup H' \cup \{xz, y x'\}$ and $\hat{G}_2 = H \cup H' \cup \{xz, y y'\}$.
		Let $\hat{G} = \hat{G}_1 \cup \hat{G}_2$.
		Let~$G$ be the graph obtained {by taking} $\Delta$ vertex-disjoint copies of~$\hat{G}$ and identifying the copies of~$z$ into a single vertex~$z_0$.
		Define $G_1$ and $G_2$ analogously.
		
		We now show that $G_1$ and $G_2$ have the desired properties.
		Clearly $\Delta(G_1) =  \Delta(G_2) = \Delta$.
		To see that $\chi'(G_1, G_2) = \Delta$, we first colour all edges containing~$z_0$ and then extend this to a simultaneous colouring of~$G_1, G_2$.
		By \cref{prop:same_set}, within each copy of~$\hat{G}$, the edges $xz$, $yx'$ and $yy'$ have the same colour.
		
		It remains to show that $\chi_{\lst}(G_1,G_2) > \Delta$.
		Define $\{ L(e) \colon e \in G \}$ to be such that
		\begin{align*}
			L(e) = \begin{cases}
				[ \Delta ]                   & \text{if $e \in H$ or $z_0\in e$}, \\
				[\Delta+1, 2 \Delta]         & \text{if $e \in H'$},              \\
				[\Delta-1] \cup \{\Delta+1\} & \text{if $e$ is a copy of $yx'$},  \\
				[\Delta-1] \cup \{\Delta+2\} & \text{if $e$ is a copy of  $yy'$}.
			\end{cases}
		\end{align*}
		Suppose to the contrary that $\phi$ is a simultaneous $L$-edge-colouring for~$G$.
		We consider the copy of~$\hat{G}$ such that $\phi(xz) = \Delta$.
		Then, by the property of~$H$, we have $\phi(yx') = \Delta+1$ and $\phi(yy') = \Delta+2$.
		Note that $H'$ is properly edge-coloured with colours in~$[\Delta+1, 2 \Delta]$, contradicting the property of $x'$ or~$y'$.
	\end{proof}
	
	\section{Concluding remarks}\label{sec:conclusion}
	
	In this paper, we proved new, and asymptotically best possible, bounds on the simultaneous chromatic index $\chi'(G_1,\dots, G_k)$ for any collection of graphs $G_1,\dots, G_k$ with large maximum degrees. We also studied the restricted setting where each edge is allowed to appear in a bounded number of graphs. In both situations, we reduced the {problem of} bounding $\chi'(G_1,\dots, G_k)$ to the case of stars.
	
	The natural open problem is to determine the correct error term.  Cabello used Vizing's theorem to show that if $G_1\cap G_2$ is $\Delta$-regular, then $\chi'(G_1,G_2)\leq \Delta+2$ and conjectured that the bound extends to arbitrary graphs of maximum degree $\Delta$.
	We are unaware of any examples with $\chi'(G_1,G_2) \ge \Delta+2$, or even $\chi'_{\lst}(G_1,G_2) \ge  \Delta+2$.
	So the conjecture could potentially be sharpened to $\Delta+1$.
	In fact, when $\Delta\leq 2$, then $\Delta+1$ colours always suffice {even in the list colouring setting}.
	
	\begin{proposition} \label{prop:Delta=2}
		Let $G_1,G_2$ be graphs of maximum degree $\Delta\leq 2$. Then $\chi'_{\lst}(G_1,G_2)\leq \Delta+1$.
	\end{proposition}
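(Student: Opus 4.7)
The cases $\Delta\le 1$ are immediate: for $\Delta=0$ there is nothing to colour, and for $\Delta=1$ each $G_i$ is a matching, so no two incident edges of $F:=G_1\cup G_2$ share a common $G_i$ and the simultaneous-properness constraints are vacuous; any choice of a colour from each list of size $2$ then works.

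For $\Delta=2$, the plan is to reformulate the problem as a list vertex-colouring of the \emph{conflict graph} $C$ with $V(C)=E(F)$ and $ee'\in E(C)$ whenever $e$ and $e'$ share an endpoint and both lie in some common $G_i$. A simultaneously proper $L$-edge-colouring of $(G_1,G_2)$ is precisely a proper $L$-colouring of $C$, so it suffices to show that $C$ is $3$-choosable. Writing $E_i(v)$ for the edges of $G_i$ incident to $v$, I would first establish three structural properties by case analysis at the vertices of $F$: (i)~$\Delta(C)\le 4$, since each vertex $v$ contributes at most one $C$-edge from $E_1(v)$ and at most one from $E_2(v)$; (ii)~a vertex $e=uv$ with $\deg_C(e)=4$ must lie in $G_1\cap G_2$ and satisfy $|E_1(w)|=|E_2(w)|=2$ with the other $G_1$-edge at $w$ distinct from the other $G_2$-edge at each endpoint $w\in\{u,v\}$, so all four $C$-neighbours of $e$ lie in $G_1\triangle G_2$ and have $C$-degree at most $2$, and two degree-$4$ vertices cannot be $C$-adjacent (else their common endpoint would force the ``other'' edges to coincide); (iii)~$C$ is $K_4$-free, since any four pairwise incident edges in the simple graph $F$ must share a common vertex $v$, at which only two $C$-edges arise among them.

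With these properties in hand, I would $3$-list-colour $C$ in three phases. Let $D$ be the set of degree-$4$ vertices of $C$ and $N$ the set of their $C$-neighbours (disjoint from $D$ by~(ii)). In phase~1, colour each $e\in D$ arbitrarily from its list of size~$3$, valid since $D$ is independent. In phase~2, consider $C^\star:=C[V(C)\setminus(D\cup N)]$; by construction $\Delta(C^\star)\le 3$, every vertex retains its full list of size~$3$, and $C^\star$ is $K_4$-free by~(iii). The list version of Brooks's theorem (Erdős--Rubin--Taylor) then yields a proper list colouring of $C^\star$. In phase~3, extend to $N$: each $v\in N$ has $\deg_C(v)\le 2$ with at least one $C$-neighbour in $D$, so after the previous phases its effective list has size at least~$1$, unless the sole remaining uncoloured $C$-neighbour of $v$ also lies in $N$; in that case the two vertices form an edge of $C$ with effective lists of size at least~$2$ each, which can always be properly coloured.

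The main obstacle will be verifying the structural properties of $C$, especially~(ii)---that the degree-$4$ vertices form an independent set with only low-degree neighbours---and~(iii)---that $C$ contains no $K_4$. Together these enable the application of the list Brooks theorem in phase~2; the remaining phases reduce to routine case analysis of the possible local configurations in $N$.
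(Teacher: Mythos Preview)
Your argument is correct, but the paper takes a much more direct route. The key observation there is that an edge of $G_1\setminus G_2$ and an edge of $G_2\setminus G_1$ can never conflict, since they do not lie in a common $G_i$. This permits a three-step procedure: first properly $L$-edge-colour $G_1\cap G_2$, which is a disjoint union of paths and cycles and hence $3$-edge-choosable; then extend independently to the uncoloured edges of $G_1$ and of $G_2$, each extension being possible because $G_i$ is itself a union of paths and cycles and the lists have size $3$. No conflict graph, no Brooks-type theorem, and no case analysis on high-degree vertices is needed.

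Your conflict-graph approach is sound---the structural facts (i)--(iii) all hold, and the three-phase colouring goes through as you describe (in particular $C[N]$ is a matching, so phase~3 is clean)---but it is considerably heavier than necessary here. Where the paper's proof is essentially two lines once the key observation is made, yours requires verifying that degree-$4$ vertices of $C$ form an independent set with low-degree neighbours, that $C$ is $K_4$-free, and then invoking list Brooks. The upside is that your framework is the natural one for larger $k$ and $\Delta$, and is closer in spirit to what the paper does for the general case in Section~\ref{sec:main_proof}; it just happens that for $\Delta\le 2$ the decomposition $G_1\cap G_2$, $G_1\setminus G_2$, $G_2\setminus G_1$ trivialises the problem.
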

	
	\begin{proof}
		If $\Delta=1$, one colour suffices. Assume now $\Delta=2$ and $\{L(e) \colon e \in G_1 \cup G_2\}$ with $|L(e)| \ge 3$ for all~$e \in G_1 \cup G_2$.
		Note that $G_1\cap G_2$ is a disjoint union of paths and cycles.
		Hence, we fix a proper $L$-edge-colouring~$\phi$ of~$G_1\cap G_2$.
		It remains to colour~$G_1 \setminus G_2$ and~$G_2 \setminus G_1$.
		Note that $G_1$ is a disjoint union of paths and cycles with some edges already properly coloured.
		Since $|L(e)| \ge 3$ for all~$e \in G_1$, we can extend $\phi$ to a proper $L$-edge-colouring of~$G_1$.
		A similar argument also holds for~$G_2$.
	\end{proof}
	
	\cref{prop:list_counter} shows that $\chi'(G_1,G_2)$ does not always coincide with  $\chi'_{\lst}(G_1,G_2)$.
	On the other hand, by \cref{thm:LLC-simultaneous}, the two quantities are asymptotically equal, even in the more general setting of $k$ graphs. It would be interesting to investigate how large the gap between~$\chi'$ and $\chi'_{\lst}$ can be in the simultaneous colouring setting.
	Another question is whether one really needs the (involved) techniques developed to tackle the List Colouring Conjecture to solve problems on simultaneous colouring.  It is conceivable that `simpler' explanations are waiting to be discovered.

	\bibliographystyle{amsplain}
	\bibliography{biblio}
		
\end{document}